\newtheorem{theorem}{Theorem}[section]
\newtheorem{lemma}[theorem]{Lemma}
\newtheorem{proposition}[theorem]{Proposition}
\newtheorem{conjecture}[theorem]{Conjecture}
\theoremstyle{definition}
\crefname{equation}{}{}
\crefname{lemma}{Lemma}{Lemmas}
\crefname{proposition}{Proposition}{Propositions}
\crefname{claim}{Claim}{Claims}
\crefname{theorem}{Theorem}{Theorems}
\crefname{conjecture}{Conjecture}{Conjectures}
\crefname{figure}{Figure}{Figures}
\newlist{lemenum}{enumerate}{1}
\setlist[lemenum]{label=(\alph*), ref=\thelemma(\alph*)}
\newcommand\margindef[1]{\marginpar{\textcolor{gray!70!black}{\scriptsize #1}}}
\newcommand\ol[1]{\overline{#1}}
\newcommand\wh[1]{\widehat{#1}}
\newcommand\ab[1]{\lvert#1\rvert}
\newcommand{\cei}[1]{\lceil #1 \rceil}
\newcommand{\cE}{\mathcal{E}}
\newcommand{\cH}{\mathcal{H}}
\newcommand{\bP}{\mathbb{P}}
\newcommand{\bE}{\mathbb{E}}
\newcommand{\bR}{\mathbb{R}}
\NewDocumentCommand{\operator}{%
	m 
	m 
	m 
	m 
}{%
	\expandafter\def\csname #1\endcsname{\expandafter\@ifstar{\csname #1@star\endcsname}{\csname #1@nostar\endcsname}}
	\expandafter\NewDocumentCommand\csname #1@star\endcsname{ O{#2} m }{##1\mathopen{}\left#3##2\right#4\mathclose{}}
	\expandafter\NewDocumentCommand\csname #1@nostar\endcsname{ O{} O{#2} m }{##2\mathopen##1#3##3\mathclose##1#4}
}
\NewDocumentCommand{\coperator}{%
	m 
	m 
	m 
	m 
	m 
}{%
	\expandafter\def\csname #1\endcsname{\expandafter\@ifstar{\csname #1@star\endcsname}{\csname #1@nostar\endcsname}}
	\expandafter\NewDocumentCommand\csname #1@star\endcsname{ O{#2} m m }{##1\mathopen{}\left#3##2\mathrel{}\middle#4\mathrel{}##3\right#5\mathclose{}}
	\expandafter\NewDocumentCommand\csname #1@nostar\endcsname{ O{} O{#2} m m }{##2\mathopen##1#3##3\mathrel##1#4##4\mathclose##1#5}
}
\DeclarePairedDelimiter{\abs}{\lvert}{\rvert}
\DeclarePairedDelimiter{\paren}{\lparen}{\rparen}
\newcommand{\comp}{\textup{c}}
\newcommand{\su}{\subseteq}
\newcommand{\eps}{\varepsilon}
\let\leq\leqslant
\let\geq\geqslant
\let\le\leqslant
\let\ge\geqslant
\DeclareMathDelimiter{\colonDelimiter}{\mathord}{operators}{"3A}{operators}{"3A}
\title{Disproof of the odd Hadwiger conjecture}
\author{Marcus K\"uhn \and Lisa Sauermann \and Raphael Steiner \and Yuval Wigderson}
\thanks{MK: Institute for Applied Mathematics, University of Bonn, Germany. \texttt{marcus.kuehn@uni-bonn.de}.}
\thanks{LS: Institute for Applied Mathematics, University of Bonn, Germany. \texttt{sauermann@iam.uni-bonn.de}. Research  supported by the
DFG Heisenberg Program}
\thanks{RS: Department of Mathematics, ETH Z\"urich, Switzerland. \texttt{raphaelmario.steiner@math.ethz.ch}. Research supported by SNSF Ambizione grant 216071}
\thanks{YW: Institute for Theoretical Studies, ETH Z\"urich, Switzerland. \texttt{yuval.wigderson@eth-its.ethz.ch}. Research supported by Dr.\ Max R\"ossler, the Walter Haefner Foundation, and the ETH Z\"urich Foundation}
\date{}
\begin{document}

\begin{abstract}
	We prove that there exist graphs which do not contain $K_t$ as an odd minor and whose chromatic number is at least $(\frac 32-o(1))t$. This disproves, in a strong form, the odd Hadwiger conjecture of Gerards and Seymour from 1993. 
\end{abstract}
\maketitle

\section{Introduction}

Hadwiger's conjecture \cite{MR12237}, formulated in 1943, is one of the central open problems in graph theory. Recall that a graph $F$ is a \emph{minor} of a graph $G$ if a graph isomorphic to $F$ can be obtained from $G$ by a sequence of vertex deletions, edge deletions, and edge contractions. 
\begin{conjecture}[Hadwiger's conjecture]
Let $G$ be a graph. If $\chi(G)\geq t$, then $G$ has $K_t$ as a minor. Equivalently, if $G$ does not have $K_t$ as a minor, then $\chi(G)<t$. 
\end{conjecture}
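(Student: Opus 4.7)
Hadwiger's conjecture is one of the most famous open problems in graph theory, so any honest ``proof proposal'' must be read as an outline of a potential research program rather than as a realistic plan. With that disclaimer, what follows is the approach I would pursue.

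I would argue by induction on $t$. The cases $t \leq 4$ are elementary, $t = 5$ reduces to the Four Color Theorem via Wagner's structural characterization of $K_5$-minor-free graphs, and $t = 6$ is the Robertson--Seymour--Thomas theorem (again via the Four Color Theorem). For $t \geq 7$, I would study a minimum (vertex- and contraction-critical) counterexample $G$, with $\chi(G) \geq t$ and no $K_t$ minor. Criticality forces $\delta(G) \geq t-1$, and results of Mader on contraction-critical graphs supply substantial local connectivity, in particular many internally disjoint paths between any pair of adjacent vertices. The strategy is then to combine this connectivity with a density estimate and a linkage-style argument, in the spirit of Bollob\'as--Thomason or Thomas, to construct the branch sets of a $K_t$ minor explicitly; ideally one would do this by absorbing the neighborhood of a vertex of minimum degree into $t-1$ well-connected blobs whose mutual adjacencies complete a $K_t$.

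The main obstacle is purely quantitative. The Kostochka--Thomason density bound gives only average degree $O(t\sqrt{\log t})$ for $K_t$-minor-free graphs, and the best known chromatic bound, after Delcourt--Postle and subsequent refinements, is of order $t(\log\log t)^{O(1)}$. Driving these down to the conjectured $t$ and $t-1$ respectively appears to require a genuinely new structural theorem for near-extremal $K_t$-minor-free graphs, strong enough to rule out the pseudorandom-like configurations that are consistent with Kostochka--Thomason but not with Hadwiger. The paper at hand provides indirect evidence that such a theorem, if it exists, must be rather subtle: the authors construct graphs of chromatic number roughly $\tfrac{3}{2}t$ with no odd $K_t$ minor, showing that any proof of Hadwiger cannot route only through ``odd'' connectivity invariants and suggesting that a purely density- or connectivity-based attack in the style sketched above is unlikely to be decisive without substantial new structural input.
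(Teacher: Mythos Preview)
The statement you were asked to prove is Hadwiger's conjecture, which is stated in the paper as an open conjecture; the paper does not prove it, nor does it claim to. The paper's contribution is to \emph{disprove} the stronger odd Hadwiger conjecture (Conjecture~\ref{conj:odd hadwiger}), not to resolve Hadwiger's conjecture itself. So there is no ``paper's own proof'' to compare your proposal against.

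Your response handles this situation appropriately: you correctly flag that the problem is wide open, you summarize the known partial results and the standard minimum-counterexample framework, and you honestly identify the quantitative gap between Kostochka--Thomason density bounds and the conjectured threshold as the central obstruction. Your closing remark, that the paper's disproof of the odd version shows any proof of Hadwiger cannot route purely through odd-connectivity invariants, is a reasonable reading of the paper's concluding discussion in Section~\ref{sec:conclusion}, where the authors themselves explain why their construction does not straightforwardly extend to attack Hadwiger's conjecture. In short, there is no gap in your reasoning to point to, because you did not (and could not) claim a proof; your sketch is a fair account of the state of the art.
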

This is a vast generalization of the Four-Color Theorem \cite{MR543793,MR543792}, which follows immediately from the $t=5$ case of Hadwiger's conjecture. Moreover, Hadwiger's conjecture would determine the maximum chromatic number of any minor-closed family of graphs, thus unifying and generalizing a wide array of different results and conjectures bounding  the chromatic number of various graph classes, see e.g.\ \cite{MR228378,MR831801,MR1070462}.

Despite decades of effort, Hadwiger's conjecture remains extremely far from being resolved. It is only known to hold \cite{MR1238823} for $t \leq 6$, with the cases $t=5$ and $t=6$ both shown to be equivalent to the Four-Color theorem, and even the $t=7$ case appears to be out of reach at the moment. Many researchers have attempted to prove weakenings of the conjecture, such as finding functions $f(t)$ such that every graph $G$ not having $K_t$ as a minor satisfies $\chi(G)<f(t)$; Hadwiger's conjecture is the assertion that this holds for $f(t)=t$. There has been a long line of works \cite{MR779891,MR735367,MR713722,MR4576840,MR4866587} steadily improving the function $f(t)$; the current record is due to Delcourt and Postle \cite{MR4868948}, who showed that this holds for $f(t)=Ct\log \log t$, where $C$ is an absolute constant. 
For more on the history and progress towards Hadwiger's conjecture, we refer to the surveys \cite{MR4680416,MR3526944,MR1411244}.

In this paper, our focus will be on a strengthening of Hadwiger's conjecture proposed by Gerards and Seymour in 1993 (see \cite[Section 6.5]{MR1304254}, \cite[Section 7]{MR3526944}), known as the \emph{odd Hadwiger conjecture}. To explain the motivation behind this strengthening, let us go back to considering the cases of Hadwiger's conjecture when $t$ is small. For $t=3$, it asserts that if $\chi(G)\geq 3$, then $G$ contains $K_3$ as a minor. This statement is true, since containing $K_3$ as a minor is equivalent to containing a cycle. But evidently, this true statement is much weaker than the ``real'' statement here, which is that if $\chi(G)\geq 3$, then $G$ contains an \emph{odd} cycle. The next case $t=4$ of Hadwiger's conjecture is equivalent to the fact that every graph $G$ with $\chi(G)\geq 4$ contains a $K_4$-subdivision as a subgraph. And also here, a much stronger statement involving parity is known to be true: Catlin~\cite{MR532593} proved that every such graph contains a $K_4$-subdivision where every cycle formed by three subdivision paths is of odd length. Moreover, an even stronger statement is still true: Toft conjectured in 1975, and Thomassen~\cite{MR1848060} and Zang~\cite{MR1631313} independently proved, that every graph $G$ with $\chi(G)\geq 4$ even contains a $K_4$-subdivision where \emph{all} subdivision paths have odd length. Motivated by these results and attempting to generalize Catlin's result, Gerards and Seymour proposed a natural notion, stricter than that of containing $K_t$ as a minor, which respects the parities of cycles, and conjectured that even this stronger property is implied by large chromatic number.

More precisely,
let $G$ be a graph. We say that $G$ \emph{has $K_t$ as an odd minor} if there exist vertex-disjoint trees $T_1,\dots,T_t\subseteq G$, as well as functions $\chi_i:V(T_i) \to \{\text{black},\text{white}\}$, with the following properties:
\begin{itemize}
	\item For every $i$, $\chi_i$ is a proper $2$-coloring of $T_i$, that is, for all $\set{x,y} \in E(T_i)$, we have that $\chi_i(x)\neq \chi_i(y)$.
	\item For all $i \neq j$, there are vertices $x_i \in V(T_i), x_j \in V(T_j)$ such that $\set{x_i, x_j} \in E(G)$ and $\chi_i(x_i)=\chi_j(x_j)$. 
\end{itemize}
With this definition in place, we can state the odd Hadwiger conjecture of Gerards and Seymour.
\begin{conjecture}[Odd Hadwiger conjecture]\label{conj:odd hadwiger}
	Let $G$ be a graph.
If $\chi(G)\geq t$, then $G$ has $K_t$ as an odd minor. Equivalently, if $G$ does not have $K_t$ as an odd minor, then $\chi(G)<t$. 
\end{conjecture}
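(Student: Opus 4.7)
The plan is to disprove \cref{conj:odd hadwiger}, as the abstract announces: for arbitrarily large $t$, I aim to exhibit graphs $G$ with $\chi(G) \geq (3/2-o(1))t$ but without $K_t$ as an odd minor. The modest factor $3/2$ (much smaller than the $\sqrt{\log t}$ gap known for the ordinary Hadwiger conjecture via random graphs) suggests that a purely random construction is unlikely to be tight; the bound hints at a structured or semi-random construction carefully tuned to exploit the parity constraint in the definition of odd minors.

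A natural approach is a two-level construction: start from a base graph $H$ that is ``almost bipartite'' in a quantitative sense (for instance, a bipartite graph with a small non-bipartite perturbation, or a random bipartite-like construction) and combine several copies of $H$ via a join or complete bipartite linkage to inflate the chromatic number. The chromatic number can be forced up to $(3/2-o(1))t$ by standard lower bound techniques (e.g., fractional chromatic number of joins, or a spectral/algebraic bound on $\chi$), while the odd-minor size should be controlled by the parity obstruction arising from the bipartite backbone.

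The main obstacle is the upper bound on the odd minor. Suppose for contradiction that vertex-disjoint trees $T_1,\dots,T_t \su G$ with proper $2$-colorings $\chi_i$ realize an odd $K_t$ minor. The key point to exploit is that each $\chi_i$ is rigid up to a global swap, and the odd-connection requirement forces every cross-tree edge between $T_i$ and $T_j$ to be \emph{monochromatic} under $\chi_i \cup \chi_j$. If $G$ has a large bipartite part, most of its edges are bichromatic with respect to the canonical bipartition, making monochromatic cross-tree edges scarce; this should bound $t$ by roughly $\tfrac{2}{3}\chi(G)$, which is exactly the ratio announced in the abstract. Implementing this requires handling the $2^t$ possible color swaps coherently, likely via a double-counting or averaging argument over the choice of swaps combined with a structural lemma of the form ``many monochromatic cross-edges in an almost-bipartite host force a large ordinary clique minor there''.

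The hardest step will be converting this intuition into a clean quantitative argument that correctly captures both the flexibility of the tree structures (which can be arbitrarily complex) and the tightness of the $3/2$ constant. I expect that the non-bipartite defect of the construction must be chosen very carefully: too sparse and it does not lift $\chi$ high enough above the bipartite baseline, too dense and it produces an odd $K_t$ minor of its own. Threading this needle, and rigorously ruling out the exponentially many potential odd minors, will be the main technical challenge.
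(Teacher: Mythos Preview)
Your proposal has a genuine gap: there is no concrete construction, and the ``almost bipartite'' heuristic you build around does not point in a workable direction. A bipartite graph has no odd $K_3$ minor but also $\chi \leq 2$; pushing $\chi$ up to $(3/2-o(1))t$ forces you far from bipartite, and it is unclear why a ``small non-bipartite perturbation'' would raise $\chi$ that much while keeping the odd clique minor below $t$. Joins are especially problematic: the complete bipartite graph between two parts supplies many monochromatic cross-tree edges for \emph{any} choice of the $2$-colorings $\chi_i$, so joining pieces tends to create large odd minors rather than obstruct them. Your proposed upper-bound mechanism (``monochromatic cross-edges are scarce in an almost-bipartite host'') is therefore in tension with the very operation you use to inflate $\chi$. (Also, a side remark: there is no known $\sqrt{\log t}$ gap for the ordinary Hadwiger conjecture---no counterexamples to Hadwiger are known at all; you may be conflating this with the Kostochka--Thomason density bound.)

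The paper takes a completely different route. It works in the complement: the counterexample $G$ has $\alpha(G)\leq 2$, so $\chi(G)\geq n/2$ comes for free, and the task is to show $G$ has no odd $K_t$ minor for $t\approx n/3$. A short counting argument reduces this to ruling out an \emph{odd connected pairing} of size $\eps n$: ordered pairs $(u_1,v_1),\dots,(u_s,v_s)$ of distinct vertices such that for all $i\neq j$ at least one of $\{u_i,u_j\},\{v_i,v_j\}$ is an edge of $G$. Equivalently, in the triangle-free complement $H=\overline G$, every such pairing must admit $i\neq j$ with both $\{u_i,u_j\}$ and $\{v_i,v_j\}$ in $E(H)$. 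The construction of $H$ uses a recent idea from Ramsey theory (Campos--Jenssen--Michelen--Sahasrabudhe; Hefty--Horn--King--Pfender): take two independent $G(m,p)$ with $p=m^{-1/2}/\operatorname{polylog}(m)$, delete edges in triangles, blow each up by a factor $n/m$, overlay the two blowups via independent uniform maps $[n]\to[m]$, and clean up the few remaining triangles. This produces a pseudorandom triangle-free $H$ with edge density well above $\sqrt{(\log n)/n}$, which is precisely what a union bound over the $\exp(O(n\log n))$ possible pairings requires. None of this structure---independence number $2$, the reduction to odd connected pairings, or the overlaid-blowup random construction---appears in your plan.
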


As discussed above, the $t=3$ case of \cref{conj:odd hadwiger} is quite straightforward, and is equivalent to the statement that every non-bipartite graph has an odd cycle. The $t=4$ case is equivalent to the aforementioned result of Catlin. The $t=5$ case of \cref{conj:odd hadwiger} is a substantial strengthening of the Four-Color theorem, and a proof was announced by Guenin \cite{MR2164877}, but the conjecture remains wide open for $t\geq 6$.

Similarly to the case of Hadwiger's conjecture, there have been a large number of results \cite{MR698708,MR2467815,MR2518204,MR4426767,2010.05999,fracoddhadwiger,oddclustered1,oddclustered2,oddclustered3,clusteredodd,topoddhadwiger,topoddhadwiger2} proving weakenings of the odd Hadwiger conjecture. Interestingly, the density-based techniques used in the study of Hadwiger's conjecture do not work without serious modifications in the setting of odd minors, as they do not interact naturally with the parity requirements defining an odd minor. Despite this fact, the third author proved~\cite{MR4379303} that Hadwiger's conjecture and its odd variant are asymptotically equivalent. More precisely, he showed that, if $f$ is any function such that any graph $G$ not containing $K_t$ as a minor satisfies $\chi(G)< f(t)$, then any graph $G$ not containing $K_t$ as an odd minor satisfies $\chi(G) < 2\cdot f(t)$. In particular, combined with the result of Delcourt--Postle \cite{MR4868948} mentioned above, this theorem shows that every graph $G$ with $\chi(G)\geq Ct\log \log t$ contains $K_t$ as an odd minor. But more importantly, this result shows that Hadwiger's conjecture and its odd variant are very closely connected; for instance, if Hadwiger's conjecture is true, then every graph $G$ with $\chi(G) \geq 2t$ contains $K_t$ as an odd minor.

Despite all of this evidence towards the odd Hadwiger conjecture, it turns out to be false. Our main result disproves the conjecture in the following strong form.

\begin{theorem}\label{thm:main}
	For any fixed $\delta>0$ and every integer $t$ which is sufficiently large with respect to $\delta$, there is a graph $G$ with $\chi(G) \geq (\frac 32 -\delta)t$ which does not have $K_t$ as an odd minor. In particular, for every sufficiently large $t$, there is a graph with $\chi(G)\geq t$ which does not have $K_t$ as an odd minor.
\end{theorem}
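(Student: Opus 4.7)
The natural plan is to construct $G$ via a randomized approach, taking $G = G(n,p)$ for parameters $n, p$ chosen in terms of $t$ and $\delta$, or more generally a random graph drawn from a carefully designed distribution (for instance, a random blow-up of an explicit base graph, or a random subgraph of a structured algebraic graph). The parameters should be tuned so that $\chi(G)$ concentrates near $(\tfrac{3}{2}-\delta)t$.

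The chromatic number lower bound should follow from standard concentration results for the chromatic number of random graphs (Bollob\'as-type estimates, or Shamir--Spencer concentration combined with upper bounds on the independence number). If the construction is a random modification of an explicit graph, one would instead use the explicit chromatic number of the base together with a closure property under the randomization.

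The main step is showing that $G$ contains no $K_t$ as an odd minor. I would approach this via a union bound over all candidate odd-minor structures. Any such structure is specified by vertex-disjoint subsets $V_1, \dots, V_t \subseteq V(G)$, together with bipartitions $V_i = B_i \cup W_i$ so that $G[V_i]$ admits a spanning tree with parts exactly $(B_i, W_i)$, and, for every $i\ne j$, an edge of $G$ either between $B_i$ and $B_j$ or between $W_i$ and $W_j$. The plan is to enumerate all such structures (or a cleaner reduction, e.g.\ fixing only the pairs $(B_i,W_i)$), bound the probability that each is simultaneously realized under the random model, and then union bound over the choices. A specific optimization of the parameters should yield the factor $\tfrac{3}{2}$.

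\textbf{Main obstacle.} The hard part will be making the union bound in the final step strong enough to yield the factor $\tfrac{3}{2}$. Odd minors carry substantially more flexibility than ordinary minors, because the 2-coloring of each tree is itself part of the data; consequently a naive count of structures easily overwhelms the probabilistic savings. To extract the factor $\tfrac{3}{2}$ one must exploit the matching-color constraint on inter-tree edges in a quantitative way, balancing it against the bipartite rigidity of each tree. The correct balance should produce the factor $\tfrac{3}{2}$, improving on the trivial $1$ (ordinary Hadwiger) and approaching the factor $2$ barrier from the third author's earlier equivalence result. Identifying the random model for which this balance is simultaneously compatible with a chromatic number of $(\tfrac{3}{2}-\delta)t$ is the central design problem.
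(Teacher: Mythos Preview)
Your framework---random construction plus a union bound over odd-minor data---is broadly right, but the proposal misses the two structural ideas that actually make the proof work, and it misidentifies where the factor $\tfrac32$ comes from.

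First, the paper does \emph{not} obtain the chromatic lower bound by concentration. Instead it builds $G=\overline{H}$ where $H$ is triangle-free, so $\alpha(G)\le 2$ and $\chi(G)\ge n/2$ holds deterministically. The entire proof then reduces to showing that such a $G$ has no $K_t$ odd minor for $t\ge(\tfrac13+\eta)n$. The ratio $\tfrac32$ is simply $(n/2)/(n/3)$: the $n/2$ is the chromatic bound from $\alpha\le 2$, and the $n/3$ arises from a one-line pigeonhole---if $t>(\tfrac13+\eta)n$ then at most $n/3$ of the trees $T_i$ can have $\ge 3$ vertices, so $\Omega(n)$ of them have at most two vertices, yielding an \emph{odd connected pairing} of linear size. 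The union bound is then run only against pairings, not against general odd-minor templates with arbitrary $(B_i,W_i)$, and it does not need to be tuned to produce $\tfrac32$; one only needs to beat \emph{any} linear-size pairing.

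Second, the plain $G(n,p)$ model you suggest cannot work. The paper's heuristic (Section~2) shows that killing all linear-size odd connected pairings by a union bound needs edge density $p\gg\sqrt{\log n/n}$ in the complement $H$, while making $H$ triangle-free caps the density of any $G(n,p)$-like construction at $p\le c/\sqrt{n}$; these are incompatible. The resolution is the recent overlaid-blowup idea from Ramsey theory (Campos--Jenssen--Michelen--Sahasrabudhe and Hefty--Horn--King--Pfender): take two independent $G(m,p)$'s with $p=o(m^{-1/2})$, delete their triangles, blow each up to $n$ vertices via independent random maps $[n]\to[m]$, overlay them, and clean up the few remaining triangles. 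This yields a triangle-free $H$ whose effective density (as a function of $n$) exceeds the $\sqrt{\log n/n}$ threshold. The technical work is then a union bound showing that for every $s$-pairing with $s=\eps n$, some pair $\{u_i,u_j\},\{v_i,v_j\}$ survives as an edge pair of $H$; this requires controlling the interaction of the two random projections (Lemma~4.2) and the survival of edge pairs through the triangle-deletion steps (Lemma~4.4). None of this is a ``balance yielding $\tfrac32$''; the balance you describe is not present in the argument.
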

Thus, not only is the odd Hadwiger conjecture false, it is false by a factor of roughly $\frac 32$ in the sense that the chromatic number of $G$ can exceed the order of the largest odd clique minor by an asymptotic factor of $\frac 32$. Note that, as discussed above, Hadwiger's conjecture would imply that this ratio is at most $2$. Unfortunately, as we shortly discuss further, the constant $\frac 32$ is a hard limit of our approach.

Our graph $G$ in \cref{thm:main} forming a counterexample to the odd Hadwiger conjecture is a graph with independence number $\alpha(G)=2$. There has been a lot of previous research on Hadwiger's conjecture and its odd variant for graphs of independence number $2$. Note that if $G$ is such a graph, then $\chi(G)\geq \cei{n/\alpha(G)}=\cei{n/2}$, so Hadwiger's conjecture (resp.\ \cref{conj:odd hadwiger}) predicts that $G$ contains a clique minor (resp.\ odd clique minor) of order $\cei{n/2}$. A simple argument of Duchet and Meyniel \cite{MR671905} shows that if $G$ is an $n$-vertex graph with $\alpha(G)=2$, then $G$ contains a clique minor of order $\lceil n/3\rceil$. This was extended to the odd minor setting by Kawarabayashi and Song \cite{kawasong}, who proved that such a graph $G$ must in fact have $K_{\lceil n/3\rceil}$ as an odd minor.
It remains a major open problem (e.g.\ \cite[Open problem 4.8]{MR3526944} and~\cite{MR2674817,2512.17114}) to improve the constant factor $\frac 13$ in either of these results; see \cite[Section 4]{MR2735925} for the current best known result on Hadwiger's conjecture in graphs of independence number $2$, improving the Duchet--Meyniel bound by a sublinear amount. Our counterexample to \cref{conj:odd hadwiger} shows that the bound of $\lceil n/3\rceil$ due to Kawarabayashi--Song for the odd Hadwiger conjecture in graphs of independence number $2$ is asymptotically best possible. More precisely, we prove the following strengthening of \cref{thm:main}.

\begin{theorem}\label{thm:main alpha=2}
	For any fixed $\eta>0$, and every integer $n$ which is sufficiently large with respect to $\eta$, there exists an $n$-vertex graph $G$ with $\alpha(G)\leq 2$ which does not have $K_t$ as an odd minor for any $t\geq (\frac 13 + \eta)n$. 
\end{theorem}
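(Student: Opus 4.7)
The plan is to take $G$ to be the complement of a suitable triangle-free graph $H$ on $n$ vertices. Since $\alpha(G)\le2$ if and only if $H=\ol G$ is triangle-free, the independence condition is automatic. The graph $H$ should be chosen to be pseudorandom with small independence number $\alpha(H)=o(n)$ (for instance, a random triangle-free graph produced by the triangle-free process, or a pseudorandom $(n,d,\lambda)$-construction), so that $\omega(G)=\alpha(H)=o(n)$ as well, which will be crucial.

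Suppose for contradiction that $G$ has an odd $K_t$ minor with $t\ge(\tfrac13+\eta)n$, with branches $V_1,\dots,V_t$ and bipartitions $(A_i,B_i)$. A first key observation is a rigidity for singleton branches: two singletons $\{u\},\{v\}$ satisfy the odd condition if and only if $uv\in E(G)$ and they share the same color. Hence all singleton branches share a single color and form a clique, so the number $s$ of singletons satisfies $s\le\omega(G)=o(n)$. Combined with $s+d+e_{\ge3}=t$ and $s+2d+3e_{\ge3}\le n$ (where $d$ and $e_{\ge3}$ count doubleton and larger branches), this yields $d\ge 3t-2s-n\ge(3\eta-o(1))n$, so a positive proportion of branches are doubletons.

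Next, the doubleton structure must be analyzed. Each doubleton $V_i=\{v_i,v_i'\}$ is an edge of $G$; choosing orientations $\sigma_i\in\{+,-\}$ (specifying which endpoint is colored black), the odd condition between $V_i$ and $V_j$ reduces to: if $\sigma_i=\sigma_j$, at least one of $v_iv_j,v_i'v_j'$ lies in $E(G)$; and if $\sigma_i\ne\sigma_j$, at least one of $v_iv_j',v_i'v_j$ lies in $E(G)$. Equivalently, in $H=\ol G$ no pair of doubletons may display both $H$-edges of the parallel pattern forced by its orientation, giving a 2-SAT-like system on $\sigma\in\{+,-\}^d$. The aim is to show that this system is infeasible for $d=\Omega(n)$ whenever $H$ is sufficiently pseudorandom: a counting/expander-mixing estimate for $C_4$-patterns on any collection of $\Omega(n)$ disjoint non-edges of $H$ should either produce a pair of doubletons with no valid cross-edge at all, or force an odd cycle in the resulting constraint graph.

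The main obstacle is exactly this final step — engineering the pseudorandom properties of $H$ so that the $C_4$-count on doubleton pairs unconditionally prevents a valid orientation $\sigma$ — together with extending the argument to branches of size $\ge 3$, whose total count $e_{\ge3}\le(n-s-2d)/3$ is bounded but can still be substantial. I expect the larger branches to be handled by a parallel parity/counting argument, or to be reduced to an effective doubleton analysis after collapsing each tree branch along its bipartition.
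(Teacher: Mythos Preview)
Your setup is the same as the paper's --- take $G=\ol H$ for a suitable triangle-free $H$ --- and your branch-size count is correct. But the proposal is not a proof: you yourself identify the crux (``engineering the pseudorandom properties of $H$ so that the $C_4$-count \ldots\ unconditionally prevents a valid orientation $\sigma$'') and leave it unresolved. This is precisely the hard part, and the specific constructions you suggest do not obviously suffice. The paper's heuristic section explains why: a union bound over pairings requires edge density $p>(2+o(1))\sqrt{\log n/n}$, whereas the triangle-free process terminates at density $(\tfrac12-o(1))\sqrt{\log n/n}$, a stubborn factor-of-$4$ shortfall. Your alternative idea of using expander mixing to find a $C_4$ or an odd constraint cycle is plausible-sounding but undeveloped; note that you need the infeasibility to hold for \emph{every} collection of $\Omega(n)$ doubletons simultaneously, so an expectation or typical-case estimate is not enough, and it is unclear how to extract this from standard $(n,d,\lambda)$ bounds.

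The paper resolves the gap in two ways that differ from your outline. First, it passes to a cleaner intermediate notion, an \emph{odd connected pairing}: pairs $(u_i,v_i)$ (not required to be $G$-edges) such that for all $i\neq j$ one of $u_iu_j$, $v_iv_j$ lies in $E(G)$. This absorbs both your singleton case (a clique $K_s$ plus arbitrary partners is an odd connected pairing) and your doubleton case, so no separate $\omega(G)$ bound or 2-SAT analysis is needed; it suffices to show $G$ has no odd connected pairing of size $\eps n$ for any fixed $\eps>0$. Second, and crucially, the paper builds $H$ by a new construction: take two independent copies of $G(m,p)$ with $m\approx n/\log^8 n$, delete triangles, blow each up to $n$ vertices via independent random maps $[n]\to[m]$, overlay them, and clean up the few remaining triangles. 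This produces a triangle-free $H$ whose effective density (viewed on $n$ vertices) comfortably exceeds the union-bound threshold, and the two independent layers of randomness allow a conditioning argument that makes the union bound over all $\eps n$-pairings go through. Your handling of branches of size $\ge 3$ is also unnecessary once one has the odd-connected-pairing statement: a simple count shows at least $\eta n$ branches have size $\le 2$, and either case yields an odd connected pairing of size $\eta n/2$.
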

This theorem directly implies \cref{thm:main}, using that any graph $G$ with $\alpha(G)\leq 2$ satisfies $\chi(G)\geq n/\alpha(G)\geq n/2$. More details on this simple deduction can be found in \cref{sec:deduce main thm}.

The factor $\frac 13$ in \cref{thm:main alpha=2} is optimal, due to the aforementioned result of Kawarabayashi and Song. We complement \cref{thm:main,thm:main alpha=2} with the following result, proved in \cref{appendix:optimality}, showing that also the factor $\frac 32$ for the chromatic number in \cref{thm:main} is optimal for graphs $G$ with $\alpha(G)\leq 2$.

\begin{proposition}\label{prop:stupid}
    For any positive integer $t$ and any graph $G$ with $\alpha(G)\leq 2$ which does not have $K_t$ as an odd minor, we have $\chi(G)\leq \lceil\frac{3}{2}(t-1)\rceil$. 
\end{proposition}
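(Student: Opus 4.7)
The plan is to prove the contrapositive: if $\alpha(G) \le 2$ and $\chi(G) > \lceil \tfrac{3}{2}(t-1)\rceil$, then $G$ has $K_t$ as an odd minor. First, I use the standard fact that $\alpha(G)\le 2$ forces every optimal coloring to consist of color classes of size at most $2$, so $\chi(G) = n - \nu(\ol G)$, where $n = |V(G)|$ and $\nu(\ol G)$ is the matching number of $\ol G$. Fix a maximum matching $M$ of $\ol G$ with edges $\{u_i,v_i\}$ for $i = 1,\ldots,\nu$, and let $U$ be the set of $n-2\nu$ unmatched vertices. Then $U$ is a clique in $G$ (any $\ol G$-edge inside $U$ would augment $M$), and the triangle-freeness of $\ol G$ combined with the maximality of $M$ implies that for each matched pair at most one of $u_i, v_i$ has a $\ol G$-neighbor in $U$ (otherwise one obtains either a $\ol G$-triangle or an augmenting path); relabeling, I may assume $u_i$ is $G$-adjacent to every vertex of $U$.

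Next, I would construct an odd $K_s$-minor in $G$ with $s\ge \lceil\tfrac{2}{3}\chi(G)\rceil$, which is enough since $\chi(G) > \lceil\tfrac{3}{2}(t-1)\rceil$ implies $\lceil\tfrac{2}{3}\chi(G)\rceil\ge t$. The branch sets are assembled in two groups. First, each vertex of $U$ becomes its own singleton branch set, all colored black and pairwise connected through the clique $U$. Second, the matched vertices are organized into further branch sets: when $\nu\le|U|$, each pair $\{u_i,v_i\}$ is combined with a helper $w_i\in U$ satisfying $w_iv_i\in E(G)$, giving a three-vertex path $u_i\text{--}w_i\text{--}v_i$ with $w_i$ colored black; when $\nu>|U|$, I would apply the Kawarabayashi--Song theorem inside the induced subgraph $G[V(M)]$ to obtain roughly $\lceil\tfrac{2\nu}{3}\rceil$ additional branch sets, the $2$-colorings of which can be flipped independently so that each places on the black side a vertex adjacent to all of $U$. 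Because every such black vertex is $G$-adjacent to all of $U$, the required black--black edges between the $U$-singletons and the matched branch sets are automatic, and a count shows the total number of branch sets is at least $\lceil\tfrac{2}{3}(n-\nu)\rceil = \lceil\tfrac{2}{3}\chi(G)\rceil$.

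The main obstacle will be verifying the construction in the intermediate regime where $|U|$ is close to $\nu$ and the two sub-constructions above must be combined, and in particular handling ``bad'' pairs for which $v_i$ is $\ol G$-adjacent to every vertex of $U$, so that no helper is available inside $U$. In such cases the helper must be drawn from another matched pair, and the $2$-colorings of the resulting Kawarabayashi--Song branch sets must be synchronized so that every two branch sets of the minor admit a same-color $G$-edge between them. The triangle-freeness of $\ol G$ together with the maximality of $M$ provides the structural rigidity needed to arrange this synchronization, but the detailed bookkeeping will be the most delicate part of the argument.
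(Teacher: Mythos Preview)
Your proposal follows a direct-construction route rather than the paper's inductive argument via Gallai's theorem on $k$-critical graphs, and while the opening observations (that $\chi(G)=n-\nu(\ol G)$, that $U$ is a $G$-clique, and that at most one endpoint of each $\ol G$-matching edge can have an $\ol G$-neighbor in $U$) are correct and useful, the construction breaks down at exactly the places you yourself flag.

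First, there is an inconsistency in the case $\nu\le|U|$: you declare that every vertex of $U$ is a singleton branch set and also that each matched pair absorbs a helper $w_i\in U$. A vertex cannot serve in two branch sets, so in fact only $|U|-\nu$ singletons survive, giving $|U|=n-2\nu$ branch sets altogether; this meets $\lceil\tfrac{2}{3}(n-\nu)\rceil$ only when $n\ge 4\nu$, not merely when $n\ge 3\nu$. Moreover, a helper $w_i$ with $w_iv_i\in E(G)$ need not exist at all (a ``bad'' $v_i$ may be $\ol G$-adjacent to every vertex of $U$), so this case is not even well-defined without the patch you defer to the last paragraph.

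Second, and more fundamentally, the combination step in the case $\nu>|U|$ is the crux, and the hope that ``triangle-freeness together with maximality of $M$'' will force the required synchronization is not substantiated. The Kawarabayashi--Song theorem hands you \emph{some} odd $K_{\lceil 2\nu/3\rceil}$-minor inside $G[V(M)]$, with no control over which matched endpoints land in which branch set. A branch set could, for instance, be a single vertex $v_i$ that fails to be $G$-adjacent to some particular $w\in U$; then no choice of its color yields a monochromatic edge to the black singleton $\{w\}$. Nothing in the triangle-freeness of $\ol G$ or the maximality of $M$ rules this out, and the flipping freedom (one bit per tree) is far too coarse to repair it. This is not ``detailed bookkeeping'' but a genuine obstruction: you would effectively need a strengthening of Kawarabayashi--Song that lets you force each branch set to contain a vertex from the ``good'' set $\{u_1,\dots,u_\nu\}$, and no such statement is available.

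The paper sidesteps all of this. It takes a minimal counterexample, observes that it is $k$-critical with $n\le 2k-2$ (using Kawarabayashi--Song only to bound $n\le 3(t-1)$), and then invokes Gallai's theorem to split $V(G)=X_1\cup X_2$ with all $X_1$--$X_2$ edges present. Induction on $G[X_1]$ and $G[X_2]$ produces odd clique minors of orders $t_1,t_2$, and the complete bipartite join lets one glue them into an odd $K_{t_1+t_2}$ (or $K_{t_1+t_2+1}$) minor of $G$; a short parity computation then gives the contradiction. The complete join coming from Gallai's decomposition is precisely what supplies the ``synchronization'' for free, replacing the part of your argument that does not go through.
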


\section{Heuristics: pseudorandom triangle-free graphs}
We now turn to discuss, at a high level, the heuristics and intuition  underlying our construction of the graph $G$ in \cref{thm:main alpha=2}. This section can easily be skipped by readers who prefer to proceed directly to the formal construction in \cref{sec:construction} and the proofs in the rest of the paper. 

Since our goal is to build a graph $G$ with $\alpha(G)\leq 2$, it is more convenient to work in the complement: we set $G=\ol H$, and wish to construct a triangle-free graph $H$ whose complement does not contain $K_t$ as an odd minor. 

The simplest idea one could try is to let $H$ be a random graph. Recall that the \emph{binomial random graph} $G(n,p)$ is an $n$-vertex graph in which every unordered pair of distinct vertices forms an edge independently with probability $p$. As our first idea, we could sample $H \sim G(n,p)$, and aim to prove that with positive probability, we simultaneously have that $H$ is triangle-free, and that $G=\ol H$ contains no large odd minor. For this discussion, we will only try to show that $G$ does not contain $K_{n/2}$ as an odd minor---which already suffices to disprove \cref{conj:odd hadwiger}---as opposed to trying to obtain the optimal bound on the odd minor order given in \cref{thm:main alpha=2}. For simplicity, let us assume that $n$ is even.

If $G$ has $K_{n/2}$ as an odd minor, then there exist trees $T_1,\dots,T_{n/2}$ satisfying the definition of an odd minor. Note that since we have $n/2$ trees on $n$ vertices, the average number of vertices of a tree $T_i$ is at most $2$. For simplicity, let us pretend for the moment that all these trees have exactly $2$ vertices (unsurprisingly, this special case is the hardest one). The structure we have in this case is called an \emph{odd connected matching} (of size $n/2$) in the literature. Studying the presence of odd connected matchings in $G$, the complement of a random graph, is somewhat annoying. It will be simpler and still sufficient for us to consider an even weaker notion,
which we call an \emph{odd connected pairing}, where we do not require $T_i$ to be a tree. More precisely, we define an \emph{odd connected pairing of size $s$} in a graph $G$ to be a set $\{(u_1,v_1),\ldots,(u_s,v_s)\}$ consisting of $s$ ordered pairs where $u_1,v_1,u_2,v_2,\dots,u_s,v_s \in V(G)$ are pairwise distinct vertices, and such that for all $i \neq j$, at least one of $\set{u_i, u_j}$ or $\set{v_i, v_j}$ is an edge of $G$. Note that if we further insisted that $\set{u_i, v_i}$ is an edge for all $i$, this would precisely correspond to having $K_s$ as an odd minor with all the trees $T_i$ consisting of exactly $2$ vertices. 

We can now begin in earnest to analyze our random construction $G = \ol H$, where $H \sim G(n,p)$. To get started, let us first estimate the probability that $G$ contains an odd connected pairing of size $n/2$. For any fixed pairing $(u_1,v_1),\dots,(u_{n/2},v_{n/2})$, and any fixed indices $i,j$, the probability that at least one of $\{u_i,u_j\}$ and $\{v_i,v_j\}$ is an edge of $G$ is $1-p^2$, since with probability $p^2$ both are included as edges of $H$. Since these events for different pairs $\{i,j\}$ of indices are independent, the probability that the fixed pairing forms an odd connected pairing in $G$ is precisely $(1-p^2)^{\binom{n/2}2}$, since we have $\binom{n/2}2$ pairs $\{i,j\}$ of indices. We now want to apply a union bound over all choices of the pairing; the number of such pairings equals the number of perfect matchings in $K_n$, which is given by $(n-1)(n-3)\dotsm 1$. Asymptotically, this quantity is equal to $(n/e+o(n))^{n/2} = \exp((\frac 12 +o(1))n\log n)$. Applying the union bound, we see that
\begin{align*}
\pr{G\text{ contains an odd connected pairing}&\text{ of size }\tfrac n2}\leq (n-1)(n-3)\dotsm 1 \cdot (1-p^2)^{\binom{n/2}2}\\
&\leq \exp\left(\Big(\frac 12 + o(1)\Big)n\log n-p^2 \binom{n/2}2\right).
\end{align*}
In particular, if $\frac 18 p^2 n^2 > (\frac 12+o(1))  n \log n$, then this exponent is negative for large $n$, and with high probability\footnote{Throughout the paper, we say an event occurs \emph{with high probability}\ if its probability tends to $1$ as $n\to \infty$.}, $G$ does not contain an odd connected pairing of size $n/2$. The condition $\frac 18 p^2 n^2 > (\frac 12+o(1)) n \log n$ is equivalent to $p>(2+o(1))\sqrt{\log n/n}$.

We also want $H$ to be triangle-free, so that $\alpha(G)\leq 2$. Here we encounter some bad news, since it is well-known that $G(n,p)$ is with high probability triangle-free if and only if $p = o(1/n)$, much smaller than the requirement $p>(2+o(1))\sqrt{\log n/n}$ we found above. However, we are empowered to modify $H$ after sampling from $G(n,p)$, so long as these modifications are ``insignificant'' enough to not seriously affect the probability that $G=\ol H$ contains an odd connected pairing. It is also well-known that, so long as $p = o(1/\sqrt n)$, a typical edge of $G(n,p)$ does not lie on any triangle; this means, intuitively, that one can delete all triangles from $G(n,p)$ without significantly affecting its structure so long as $p = o(1/\sqrt n)$. In fact, by being very careful, we could even hope to do this for $p \leq c/\sqrt n$, where $c>0$ is a small absolute constant.

Unfortunately, this is still not good enough: our requirements $p > (2+o(1))\sqrt{\log n/n}$ and $p \leq c/\sqrt n$ remain incompatible. In order to explain how we overcome this, we briefly take a detour to discuss recent research on a seemingly unrelated question, which nevertheless holds the key to proving \cref{thm:main alpha=2}.

This seemingly unrelated question is about the asymptotics of the off-diagonal Ramsey number $r(3,k)$, which is defined as the minimum $n$ such that every $n$-vertex graph contains either a triangle or an independent set of order $k$. Equivalently, we can ask for the minimum independence number among all $n$-vertex triangle-free graphs. For many decades, the best known upper bounds for this problem were obtained by a careful analysis of the random graph $G(n,c/\sqrt n)$. Essentially, by following the intuition described above, one can carefully delete all triangles from this random graph without significantly affecting its independence number. Different arguments along these lines were developed by Erd\H os \cite{MR120168}, Spencer \cite{MR491337}, Erd\H os--Suen--Winkler \cite{MR1370965}, Bollob\'as \cite[Chapter 12]{MR1864966}, and Krivelevich \cite{MR1369060}, among others. Finally, a major breakthrough on this problem was obtained by Kim \cite{MR1369063}, who designed a different randomized procedure for generating a triangle-free graph, and proved that with high probability, this process produces an $n$-vertex triangle-free graph with independence number $O(\sqrt{n \log n})$, matching the classical lower bound of Ajtai--Koml\'os--Szemer\'edi \cite{MR600598} and Shearer \cite{MR708165} up to a constant factor. Subsequently, Bohman \cite{MR2522430} analyzed a simpler random process---the so called \emph{triangle-free process}---and proved that with high probability, it generates a pseudorandom triangle-free graph with edge density $\Omega(\sqrt{\log n/n})$. Substantially more refined results were obtained independently by Bohman and Keevash \cite{MR4201797} and by Fiz Pontiveros, Griffiths, and Morris \cite{MR4073152}, who proved that the final edge density of the triangle-free process is $p=(\frac 12-o(1))\sqrt{\log n/n}$; moreover, they showed that in various ways, the triangle-free process is nearly indistinguishable from the binomial random graph $G(n,p)$ of the same edge density, apart from the fact that it has no triangles.

These results seem almost good enough for us: they give a pseudorandom triangle-free graph of edge density $(\frac 12-o(1))\sqrt{\log n/n}$, whereas we would like edge density $(2+o(1))\sqrt{\log n/n}$, merely a factor of $4$ away. Unfortunately, this ``mere'' factor of $4$ is very stubborn: the results of \cite{MR4201797,MR4073152} also show, essentially, that it is impossible to create a denser triangle-free graph without sacrificing some of the pseudorandomness properties of the triangle-free process. This intuition led the authors of \cite{MR4073152} to conjecture that in fact, the triangle-free process has the asymptotically smallest independence number among all $n$-vertex triangle-free graphs.

However, in a recent breakthrough, Campos, Jenssen, Michelen, and Sahasrabudhe \cite{2505.13371} disproved this conjecture. One of their key insights is that for the $r(3,k)$ problem, there is no need to require the very strong pseudorandomness conditions enjoyed by the triangle-free process; one can sacrifice a small amount of pseudorandomness in order to obtain higher edge density, and this trade-off can be beneficial. This was followed by another breakthrough of Hefty, Horn, King, and Pfender \cite{2510.19718}, who found a much simpler randomized construction yielding pseudorandom triangle-free graphs of higher edge density. For their application, they select edge density $p=(1+o(1))\sqrt{\log n/n}$ (which is expected to be optimal for the application to $r(3,k)$), but their construction naturally works at a wide range of possible edge densities.
Indeed, in follow-up work on a closely related problem, Campos, Jenssen, Michelen, Pfender, and Sahasrabudhe \cite{2511.10641} used essentially the same construction, but crucially worked with a polynomially higher edge density.

These results are the starting point of our approach, as they resolve the central incompatibility discussed above: we now have access to a construction of pseudorandom triangle-free graphs whose edge density can be larger than the threshold $(2+o(1))\sqrt{\log n/n}$ we encountered above. In fact, in order to simplify the proofs of our main technical results, it will be convenient to work with a logarithmically higher edge density, namely $p = \Theta((\log n)^2/\sqrt n)$.

The basic idea in the works \cite{2505.13371,2511.10641,2510.19718} is the following simple approach. Let $H_{\mathrm{start}}$ be a triangle-free graph on $m$ vertices, for some $m$ that is smaller but not much smaller than $n$, and let $H_{\mathrm{blowup}}$ be obtained from $H_{\mathrm{start}}$ by blowing up each vertex by a factor of $n/m$. That is, we replace each vertex of $H_{\mathrm{start}}$ by an independent set of order $n/m$, and each edge of $H_{\mathrm{start}}$ by a complete bipartite graph between the independent sets corresponding to its endpoints. Then on the one hand, if $H_{\mathrm{start}}$ is pseudorandom in an appropriate sense, then $H_{\mathrm{blowup}}$ retains many of the pseudorandomness properties; for example, the edge distribution among all large sets in $H_{\mathrm{blowup}}$ is roughly the same as it was in $H_{\mathrm{start}}$. Moreover, the two graphs $H_{\mathrm{blowup}},H_{\mathrm{start}}$ have essentially the same edge density. However, when viewed as a function of $n$, the edge density of $H_{\mathrm{blowup}}$ is higher than that of $H_{\mathrm{start}}$. For example, if $H_{\mathrm{start}}$ had $m^{3/2}$ edges, then $H_{\mathrm{blowup}}$ has $m^{3/2}\cdot(n/m)^2 = n^2/\sqrt m$ edges, and, if $m=o(n)$, then $n^2/\sqrt m = \omega(n^{3/2})$. In particular, if we take $H_{\mathrm{start}} \sim G(m,m^{-1/2})$, then $H_{\mathrm{blowup}}$ will be an $n$-vertex graph with edge density higher than $n^{-1/2}$, which still maintains some of the pseudorandomness properties of $G(m,m^{-1/2})$.

Of course, for many applications, including the application to $r(3,k)$ and to the odd Hadwiger conjecture, we have done essentially nothing: 
although we have artificially increased the relative edge density, the large independent sets in $H_{\mathrm{blowup}}$ completely counteract whatever we gained from this.
The other new idea in \cite{2505.13371,2511.10641,2510.19718} is to ``conceal'' the blowup structure of $H_{\mathrm{start}}$ by overlaying it with another, independent source of (pseudo)randomness. In \cite{2505.13371} this is accomplished by running the triangle-free process on the blown-up graph $H_{\mathrm{blowup}}$, but the much simpler approach taken in \cite{2510.19718,2511.10641} is simply to randomly overlay two blowups. Because the overlaying is done in a random way, the blowup structures of the two graphs are essentially ``orthogonal'', and the edges of one blowup destroy the structure of the other blowup. This is also the approach we take.

More specifically, we start with two binomial random graphs with distribution $G(m,p)$, where $p=o(1/\sqrt m)$, but $p = \omega(\sqrt{\log n/n})$. To make them triangle-free, in each of these graphs we first delete all edges occurring in triangles. Since $p=o(1/\sqrt m)$, we expect a negligible proportion of these edges to be deleted this way. Thus, we expect each of the resulting random graphs to have roughly $pm^2/2$ edges. We now blow each of these graphs up by a factor of $n/m$ and randomly overlay these blowups (for more details on this step, see the next section). This way, we obtain an $n$-vertex graph with roughly $2 \cdot (pm^2/2) \cdot (n/m)^2 = pn^2$ edges. Again, we expect few edges to participate in triangles consisting of edges from both of the blow-up graphs. We then delete an edge from every such triangle in a certain way, so that at the end of this process we expect to end up with a triangle-free graph $H$ of edge density roughly $p$; crucially, we have that $p = \omega(\sqrt{\log n/n})$, and thus our heuristic from the binomial random graph suggests that $\ol H$ is unlikely to contain an odd connected pairing of size $n/2$.

\section{The construction}\label{sec:construction}

Let $n$ be a large integer. We fix\footnote{We remark that there is a lot of flexibility in these choices; we really only need that $n/m$ is at least some small fixed power of $\log n$ and at most some small fixed power of $n$, and that the same holds for $p^2 n$ and $(p^2 m)^{-1}$.} parameters\margindef{$n,m,p$}
\[
	m = \left\lceil\frac{n}{(\log n)^{8}}\right\rceil\qquad\text{and} \qquad p = \frac{1}{m^{1/2}\cdot (\log m)^2}.
\]
Our construction is based on the following independent random choices: 
\begin{itemize}
	\item a binomial random graph $H_R \sim G(m,p)$ with vertex set $V_R=[m]$,\margindef{$H_R$}
	\item a binomial random graph $H_B \sim G(m,p)$ with vertex set $V_B=[m]$, and\margindef{$H_B$}
	\item two independent, uniformly random functions $\pi_R:[n] \to [m]$ and $\pi_B:[n] \to [m]$.\margindef{$\pi_R,\pi_B$} Here, we think of $\pi_R$ as a function to $V_R$ and $\pi_B$ as a function to $V_B$.
\end{itemize}
We now let $H_R'$ be obtained from $H_R$ by deleting every edge that lies in a triangle, and similarly for $H_B'$\margindef{$H_R',H_B',H_0$}. We then define a graph $H_0$ with vertex set $[n]$ by taking the union of the pull-backs of $H_R'$ and $H_B'$ along the maps $\pi_R$ and $\pi_B$, respectively. Formally, we join distinct $u,v \in [n]$ by an edge of $H_0$ if
\[
	\set{\pi_R(u),\pi_R(v)} \in E(H_R') \qquad\text{ or }\qquad \set{\pi_B(u),\pi_B(v)} \in E(H_B').
\]
For an edge $\set{u,v} \in E(H_0)$, we call the edge $\set{u,v}$ \emph{red} if $\set{\pi_R(u),\pi_R(v)} \in E(H_R')$, and we call the edge $\set{u,v}$ \emph{blue} if $\set{\pi_B(u),\pi_B(v)} \in E(H_B')$. Note that the graph of red edges is precisely a blowup of $H_R'$, where vertex $x \in V_R$ is replaced by an independent set of size $\ab{\pi_R^{-1}(x)}$, and similarly for the blue edges. Also note that some edges may receive both colors. On the other hand, since $H_R',H_B'$ are triangle-free graphs, there is no triangle in $H_0$ all of whose edges receive the same color. 

Thus, every triangle in $H_0$ must comprise two red edges and one blue edge, or two blue edges and one red edge. We then define $H$\margindef{$H$} to be the subgraph of $H_0$ obtained by deleting the edge of the minority color from each triangle. That is, for every red edge $\set{u,v} \in E(H_0)$ for which there exists $w$ such that $\set{u,w},\set{v,w} \in E(H_0)$ are blue edges, we delete $\set{u,v}$ from $H_0$ in constructing $H$; and similarly for every blue edge $\set{u,v} \in E(H_0)$ for which there exists $w$ such that $\set{u,w},\set{v,w} \in E(H_0)$ are red edges. It is not relevant for what follows, but it may be helpful to note that every deleted edge has only one color, since there are no monochromatic triangles in $H_0$. 

At the end of this process, we have constructed an $n$-vertex triangle-free graph $H$. We conclude the construction by taking $G=\overline H$ to be the complement of $H$, so that $\alpha(G)\leq 2$. As mentioned in the previous section, an \emph{odd connected pairing of size $s$}\margindef{odd con-\\nected pairing} in a graph $G$ is defined to be a set $\{(u_1,v_1),\ldots,(u_s,v_s)\}$ consisting of $s$ ordered pairs where $u_1,v_1,u_2,v_2,\dots,u_s,v_s \in V(G)$ are pairwise distinct vertices, and such that for all $i \neq j$, at least one of $\set{u_i, u_j}$ or $\set{v_i, v_j}$ is an edge of $G$. We will prove that with high probability, $G$ contains no odd connected pairing of size at least $\varepsilon n$, which then implies the following theorem. 

\begin{theorem}\label{thm:odd pairing}
	For every fixed $\eps>0$ and all sufficiently large $n$, there exists an $n$-vertex graph $G$ with $\alpha(G)\leq 2$ which does not contain an odd connected pairing of size at least $\eps n$. 
\end{theorem}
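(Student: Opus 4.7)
The plan is to verify two properties of the random graph $H$: first, that $H$ is triangle-free (so $\alpha(G) \le 2$ for $G=\overline{H}$); and second, that $G$ contains no odd connected pairing of size $\geq \eps n$, both with high probability. Triangle-freeness is immediate from the construction: $H_R'$ and $H_B'$ are triangle-free by design, so $H_0$ contains no monochromatic triangle, and any triangle of $H_0$ loses its minority-color edge in passing to $H$.

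For the odd connected pairing property, note that any odd connected pairing of size $s'$ contains one of every smaller size (by restricting to any subset of its pairs), so it suffices to show that with high probability $G$ has no odd connected pairing of size exactly $s := \lceil \eps n \rceil$. I would set up a union bound over all configurations $\mathcal{C}=\{(u_1,v_1),\ldots,(u_s,v_s)\}$ of $2s$ distinct vertices. The number of such configurations is at most $n!/(s!(n-2s)!) \leq \exp(s\log n + O(s))$, so it suffices to show that for each fixed $\mathcal{C}$,
\[
    \mathbb{P}\bigl[\mathcal{C}\text{ is an odd connected pairing in }G\bigr] \leq \exp(-s\log n - \omega(s)).
\]

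The target bound is motivated by the heuristic from the introduction. For pseudorandom $H_R, H_B$, each potential edge of $H$ is present with probability $\approx 2p$, so the probability that both $\{u_i,u_j\}$ and $\{v_i,v_j\}$ lie in $E(H)$ is $\approx (2p)^2 = 4p^2$. If these events were independent over the $\binom{s}{2}$ pairs, the probability that $\mathcal{C}$ is an odd connected pairing would be approximately $(1-4p^2)^{\binom{s}{2}} = \exp(-\Theta(s^2 p^2)) = \exp(-\Theta(n(\log n)^4))$, comfortably beating the union bound. To justify this rigorously, I would proceed in three stages: (i) show with probability $1-o(1)$ that $H_R, H_B$ satisfy standard pseudorandomness properties (degrees close to $mp$, bounded codegrees, and only a negligible fraction of edges deleted in forming $H_R', H_B'$) via Chernoff-type estimates; (ii) for a fixed $\mathcal{C}$, expose $\pi_R,\pi_B$ on the $u$-vertices and establish by concentration that the induced subgraph $G_u := H[\{u_1,\ldots,u_s\}]$ contains at least $\alpha s^2 p$ edges for some constant $\alpha>0$; and (iii) expose $\pi_R,\pi_B$ on the $v$-vertices and bound the probability that no pair $(i,j) \in E(G_u)$ has $\{v_i, v_j\} \in E(H)$.

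The hard part is step (iii): we need the probability of ``no matching $v$-edge'' to be at most $\exp(-\Omega(e(G_u)\cdot p)) = \exp(-\Omega(s^2 p^2))$, whereas standard tools---Janson's inequality, the second moment method, or McDiarmid-type concentration---appear to yield only $\exp(-\Omega(s))$, far too weak against the $\exp(s\log n)$ count of configurations. The required strengthening likely comes from a sequential exposure of the $v_i$: at each step, one bounds the ``miss set'' in $[m]^4$ of values of $(\pi_R(v_i),\pi_B(v_i))$ that would create no new matching edge. Under the pseudorandomness of $H_R', H_B'$, an inclusion--exclusion analysis should show this miss set shrinks at the expected geometric rate, ultimately yielding a counting-lemma-type bound reminiscent of homomorphism counts into the pseudorandom complement graphs $\overline{H_R'}$ and $\overline{H_B'}$. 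A secondary technical complication is the triangle-deletion step defining $H$ from $H_0$, which introduces correlations between edge events; however, since the codegree $mp^2 = (\log m)^{-4}$ is tiny, only an $o(1)$ fraction of $H_0$'s edges are deleted, and its effect on the main estimates should be controllable.
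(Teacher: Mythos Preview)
Your overall framework—union bound over $s$-pairings, aiming for a per-pairing failure probability of $\exp(-\omega(s\log n))$—matches the paper's, and you correctly locate the crux. But the exposure order you propose (fix $H_R,H_B$, then reveal $\pi_R,\pi_B$) leads to genuine gaps, some of which you already anticipate.

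The first gap is in step~(ii). You cannot afford to establish ``$e(G_u)\ge \alpha s^2p$'' inside the union bound: changing a single value $\pi_R(u_i)$ can alter up to $s-1$ of the edge-indicators, so bounded-differences inequalities give concentration only of order $\exp(-\Theta(sp^2))=\exp(-\Theta(\log^4 n))$, hopelessly weak against the $\exp(\Theta(s\log n))$ pairings. Moreover, $G_u=H[\{u_1,\dots,u_s\}]$ is not even determined by $\pi_R,\pi_B$ on the $u$-vertices alone, since whether a red edge $\{u_i,u_j\}$ is closed by a blue cherry depends on $\pi_B$ at \emph{all} vertices; so your two-stage exposure scheme does not cleanly separate. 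In step~(iii) you already concede that standard tools give only $\exp(-\Omega(s))$, and the sequential-exposure idea is not developed enough to evaluate; the blowup structure (many $v_i$ may land on the same vertex of $[m]$) is precisely the dependency that undermines the naive geometric-decay picture you are hoping for.

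The paper reverses your conditioning. It fixes $\pi_R,\pi_B$ outside a single global high-probability event $\cE(\eps)$ (\cref{lem:random coupling}): for \emph{every} $s$-pairing $P$, at least one of $\pi_R,\pi_B$ is ``$\eps$-respectful'' of $P$, meaning that $\Theta(s)$ of the projected pairs $\{\pi(u_i),\pi(v_i)\}$ are pairwise distinct in $\binom{[m]}2$. This replaces your step~(ii) by a deterministic combinatorial consequence of one $o(1)$-probability event—no per-pairing concentration is needed. From the respectful projection one \emph{deterministically} builds a graph $\Gamma$ on vertex set $\binom{[m]}2$ with $\Theta(n^2)$ edges and maximum degree $O(n^2/m^2)$ (\cref{lem:respectful yields many edges}). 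After also fixing $H_B$ (so blue cherries are determined and the $O(n^2/\log n)$ affected index pairs are pruned, \cref{lem:closed by cherries}), all remaining randomness lies in $H_R$. \cref{lemma: one connection survives} then delivers the $n^{-2n}$ bound: one uses a Talagrand-type inequality to find a large matching in $\Gamma[E(H_R^*)]$, passes to a submatching whose triangle-witness edge sets are disjoint, and then the events ``$e,f\in E(H_R')$'' for matching pairs $\{e,f\}$ are genuinely independent, so a straight product bound suffices. That independence—available because the final randomization is over the \emph{graph}, not the maps—is exactly what your sequential-exposure scheme tries to simulate but cannot cleanly achieve.
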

It is not hard to show, as we do in \cref{sec:deduce main thm}, that \cref{thm:odd pairing} implies \cref{thm:main alpha=2}, and hence \cref{thm:main}. Thus, for the majority of the rest of the paper, we focus on proving \cref{thm:odd pairing}.

\subsection*{Organization and notation}
In \cref{sec:probabilistic lemmas}, we state the main probabilistic lemmas we will need to analyze our construction. We then prove \cref{thm:odd pairing} using these lemmas in \cref{sec:main proof} and afterwards deduce \cref{thm:main,thm:main alpha=2}. We prove the probabilistic lemmas in \cref{sec:coupling,sec:connection survives}, and end in \cref{sec:conclusion} with some concluding remarks. Finally, we prove \cref{prop:stupid} in \cref{appendix:optimality}.

For a non-negative integer $n$, we write $[n]:=\{1,\dots,n\}$, as usual. All logarithms are to base~$e$. To save parentheses, we write $\log^k n$ instead of $(\log n)^k$. Given a set $S$ and an integer $k$, we use $\binom Sk$ to denote the collection of all $k$-element subsets of $S$. For an event $\mathcal{E}$, we write $\mathcal{E}^\comp$ to denote the complementary event. We furthermore use the standard asymptotic notation $O,\Omega,o$, and $\omega$. For a graph $G$, we denote by $\Delta_G$ its maximum degree, by $e(G)$ its number of edges, by $\chi(G)$ its chromatic number, by $\alpha(G)$ its independence number, and by $\nu(G)$ its matching number.

In order to help the reader track the main definitions, we include marginal notes to indicate the first time a key notion is introduced.

\section{Probabilistic lemmas}\label{sec:probabilistic lemmas}
In order to prove \cref{thm:odd pairing}, we will analyze a number of properties depending on the random choices of $H_R,H_B,\pi_R,\pi_B$. In this section, we introduce various events defined by these random choices, and state a number of lemmas which say, roughly, that various ``good'' outcomes happen with high probability. In the next section, we use these probabilistic lemmas to prove \cref{thm:odd pairing}. We now introduce each of these events in turn.

The first four are quite straightforward: they say that the random graphs $H_R,H_B$ and the random functions $\pi_R,\pi_B$ are ``balanced'', in the sense that the graphs do not have abnormally high maximum degree, and the functions do not have abnormally large fibers. Formally, we define $\mathcal D_R$ to be the event\margindef{{$\mathcal D_R, \mathcal D_B$}} that the maximum degree of $H_R$ is at most $2pm$, and similarly $\mathcal D_B$ to be the event that the maximum degree of $H_B$ is at most $2pm$. We also define $\mathcal F_R$ to be the event that $\ab{\pi_R^{-1}(x)}\leq 2n/m$ for all $x \in V_R$, and\margindef{{$\mathcal F_R, \mathcal F_B$}} similarly $\mathcal F_B$ to be the event that $\ab{\pi_B^{-1}(x)}\leq 2n/m$ for all $x \in V_B$. Note that the events $\mathcal D_R, \mathcal D_B, \mathcal F_R$ and $\mathcal F_B$ only depend on the outcome of the random object $H_R, H_B, \pi_R$, and $\pi_B$, respectively. The following simple lemma, an elementary consequence of the Chernoff bound, shows that these four events hold with high probability (the proof can be found in \cref{subsec:concentration ineqs}).
\begin{lemma}\label{lem:easy chernoff}
	The events $\mathcal D_R, \mathcal D_B, \mathcal F_R, \mathcal F_B$ hold with high probability as $n \to \infty$.
\end{lemma}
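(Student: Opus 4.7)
My plan is a direct two-step routine: apply a multiplicative Chernoff bound to each relevant binomial random variable, then union bound over vertices. The key point is just to verify that the parameters $m = \lceil n/\log^8 n\rceil$ and $p = 1/(m^{1/2}\log^2 m)$ make the tail bounds decay fast enough to beat the $m$ terms introduced by the union bound.

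For the events $\mathcal D_R$ and $\mathcal D_B$: since $H_R\sim G(m,p)$, the degree of any fixed vertex $v\in V_R$ is distributed as $\Bin(m-1,p)$, so its expectation is at most $pm$. Applying a multiplicative Chernoff bound at deviation factor $2$ gives $\pr{\deg_{H_R}(v)>2pm}\le \exp(-\Omega(pm))$. Union-bounding over the $m$ vertices yields
\[
\pr{\mathcal D_R^\comp}\le m\cdot \exp(-\Omega(pm))=m\cdot\exp\paren*{-\Omega\paren*{\tfrac{m^{1/2}}{\log^2 m}}}=o(1),
\]
since $pm=m^{1/2}/\log^2 m$ grows much faster than $\log m$. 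The same argument handles $\mathcal D_B$.

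For the events $\mathcal F_R$ and $\mathcal F_B$: each fiber size $|\pi_R^{-1}(x)|$ is a sum of $n$ independent Bernoulli$(1/m)$ variables, hence $\Bin(n,1/m)$ with mean $n/m$. Another multiplicative Chernoff bound gives $\pr{|\pi_R^{-1}(x)|>2n/m}\le\exp(-\Omega(n/m))$. Since $n/m\ge \log^8 n$ by our choice of $m$, a union bound over the $m\le n$ possible values of $x$ yields
\[
\pr{\mathcal F_R^\comp}\le m\cdot\exp(-\Omega(n/m))\le n\cdot\exp(-\Omega(\log^8 n))=o(1),
\]
and analogously for $\mathcal F_B$. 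Finally, a union bound over the four events completes the proof.

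There is no real obstacle here; the only thing to check is the asymptotic bookkeeping of $n$, $m$, and $p$, which is comfortable because both $pm$ and $n/m$ tend to infinity polynomially-in-a-log while we only pay a factor of $m\le n$ from the union bound. I would include a short preliminary statement of the multiplicative Chernoff bound (in the form $\pr{X\ge 2\bE X}\le e^{-\bE X/3}$ for a sum of independent $[0,1]$ random variables) for the reader's convenience, since the section advertises this as an ``elementary consequence of the Chernoff bound''.
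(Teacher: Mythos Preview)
Your proposal is correct and follows essentially the same approach as the paper: apply a Chernoff bound of the form $\pr{X\ge 2\ex{X}}\le e^{-\ex{X}/3}$ to each degree or fiber-size binomial random variable, then union-bound over the $m$ vertices. The paper's proof is identical in structure, differing only in that it tracks explicit constants rather than using $\Omega$-notation (and one tiny quibble: the ceiling in $m=\lceil n/\log^8 n\rceil$ actually gives $n/m\le\log^8 n$, not $\ge$, but $n/m=\Theta(\log^8 n)$ is all your argument needs).
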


In our next condition, we aim to control the interactions of the two functions $\pi_R$ and $\pi_B$. The point here is that these maps should be ``orthogonal'', so that the randomness from $H_R$ and from $H_B$ interact a great deal in defining $H$. However, the formal condition requires some setup. 
For $s\in\mathbb{N}$, an \emph{$s$-pairing}\margindef{{$s$-pairing}} in $[n]$ is a collection of pairs $\{(u_1,v_1),\dots,(u_{s},v_{s})\}$, where $u_1,\dots,u_{s},v_1,\dots,v_{s}$ are distinct elements of $[n]$. 
For an $s$-pairing $P=\{(u_1,v_1),\dots,(u_{s},v_{s})\}$ in $[n]$, and $\eps>0$, we say that a map $\pi:[n] \to [m]$ is \emph{$\varepsilon$-respectful}\margindef{{$\varepsilon$-respectful}} of $P$ if there is a subset $I \subseteq [s]$ with $\ab I \geq \varepsilon s$ such that $\pi(u_i)\neq \pi(v_i)$ for all $i \in I$, and furthermore the pairs $\{\pi(u_i),\pi(v_i)\}$ are distinct for all $ i \in I$. That is, $\pi$ is $\varepsilon$-respectful of the pairing $P$ if at least $\varepsilon s$ of the pairs in $P$ get mapped to distinct pairs in $\binom{[m]}2$. Finally, for $\eps>0$, let $\mathcal E(\varepsilon)$ be the event\margindef{{$\cE(\varepsilon)$}} that for each $s$-pairing $P$ in $[n]$ with $s= \lceil\eps n\rceil$, at least one of the random functions $\pi_R$ and $\pi_B$ is $\varepsilon$-respectful of $P$. Note that this event only depends on the random functions $\pi_R$ and $\pi_B$, but not on the random graphs $H_R$ and $H_B$. The following lemma, proved in \cref{sec:coupling}, shows that the event $\cE(\varepsilon)$ happens with high probability.

\begin{lemma}\label{lem:random coupling}
	Fix $0<\varepsilon<\frac 1{10}$. Then the event $\cE(\varepsilon)$ holds with high probability as $n \to \infty$.
\end{lemma}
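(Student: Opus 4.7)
The plan is a first-moment (union bound) calculation that crucially leverages the independence of $\pi_R$ and $\pi_B$. Write $s = \cei{\varepsilon n}$. Since $\pi_R$ and $\pi_B$ are independent and identically distributed,
\[
\pr{\cE(\varepsilon)^\comp} \leq \sum_P \pr{\pi_R\text{ not }\varepsilon\text{-resp.\ of }P}\cdot \pr{\pi_B\text{ not }\varepsilon\text{-resp.\ of }P} = \sum_P p_s^2,
\]
where the sum ranges over all $s$-pairings $P$ in $[n]$ and $p_s$ denotes the probability that a single uniform random map $\pi:[n]\to[m]$ is not $\varepsilon$-respectful of a fixed $s$-pairing (independent of the pairing by the symmetry of $\pi$). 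The main technical task is to show $p_s \leq n^{-(1-\varepsilon)s(1-o(1))}$; combined with the Stirling estimate $\binom{n}{2s}(2s-1)!! \leq n^{s(1+o(1))}$ on the number of $s$-pairings, this yields $\pr{\cE(\varepsilon)^\comp}\leq n^{s(2\varepsilon-1+o(1))}\to 0$ for any $\varepsilon<\tfrac12$, in particular for $\varepsilon<\tfrac{1}{10}$.

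To bound $p_s$, fix a pairing $P=\{(u_1,v_1),\dots,(u_s,v_s)\}$ in $[n]$. Since $u_1,v_1,\dots,u_s,v_s$ are all distinct, the $2s$ values $\pi(u_i),\pi(v_i)$ are i.i.d.\ uniform on $[m]$. Let $P_i=\{\pi(u_i),\pi(v_i)\}$, and let $D$ be the number of distinct non-degenerate unordered pairs among $P_1,\dots,P_s$. Unpacking the definitions, $\pi$ fails to be $\varepsilon$-respectful of $P$ exactly when $D<\varepsilon s$, and this in turn forces every $P_i$ either to be degenerate or to lie in some fixed set $S\subseteq\binom{[m]}{2}$ with $\ab S\leq \varepsilon s-1$. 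A union bound over the choice of $S$, exploiting the independence of the $P_i$, yields
\[
p_s \leq \sum_{k=0}^{\varepsilon s-1}\binom{\binom{m}{2}}{k}\left(\frac{2k+m}{m^2}\right)^{s}.
\]
I would estimate this sum by splitting at $k=m$ (note $\varepsilon s\gg m$ for large $n$, by the choice $m=\cei{n/\log^8 n}$). For $k\leq m$, using $(2k+m)/m^2\leq 3/m$ and $\binom{\binom{m}{2}}{k}\leq m^{2k}/(2^k k!)$, Stirling gives a total contribution at most $n^{-s(1-o(1))}$. For $m<k\leq \varepsilon s$, using $(2k+m)/m^2\leq 3k/m^2$ and $\binom{\binom{m}{2}}{k}\leq(em^2/(2k))^k$, each term rearranges as $(3e/2)^k(3k/m^2)^{s-k}$; since $3k/m^2\leq 3\varepsilon^2\log^{16}n/n=n^{-1+o(1)}$ and $s-k\geq(1-\varepsilon)s$, each such term is at most $n^{-(1-\varepsilon)s(1-o(1))}$, which dominates and gives the claimed bound on $p_s$.

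The main obstacle is that any single-map concentration estimate for $D$---for instance, Chernoff or McDiarmid---gives at best $e^{-\Omega(s)}$ failure probability, which is far too weak to overcome the $e^{\Theta(s\log n)}$ count of $s$-pairings via a one-colour union bound. The decisive gain comes from squaring $p_s$ using the independence of $\pi_R$ and $\pi_B$: this doubles the logarithmic exponent to $-2(1-\varepsilon)s\log n$, which strictly dominates the $+s\log n$ from counting pairings as soon as $\varepsilon<\tfrac12$, just enough to push the first-moment bound to zero.
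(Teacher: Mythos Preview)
Your proof is correct and follows essentially the same route as the paper: reduce to a single-map estimate via the observation that non-$\varepsilon$-respectfulness forces all the projected pairs into a small set $T\subseteq\binom{[m]}{2}$ (or to be degenerate), union-bound over $T$ using independence across $i$, then square by independence of $\pi_R,\pi_B$ and union-bound over pairings. The only cosmetic differences are that the paper fixes $\lvert T\rvert=\lfloor\varepsilon s\rfloor$ once (since the bound is monotone in $\lvert T\rvert$) and uses the cruder single estimate $(2\lvert T\rvert+m)/m^2\le n^{-7/8}$, whereas you sum over all sizes and split at $k=m$; your count of $s$-pairings is also off by a harmless factor $2^s$ (the pairs are ordered), which is absorbed in $n^{s(1+o(1))}$.
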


Recall that the graph $H'_R$ is obtained from $H_R$ by deleting all edges in triangles, and $H'_B$ is obtained analogously from $H_B$. Note that in $H_R$, the expected number of triangles is much less than the expected number of edges, hence a typical edge is unlikely to be on a triangle. As a consequence, we expect few edges to be deleted when passing from $H_R$ to $H_R'$. However, for different edges, there are also dependencies in the probabilities of these edges being on triangles and hence being deleted when passing from $H_R$ to $H_R'$. In our probabilistic analysis, we therefore first consider a somewhat denser random graph $H^*_R$, and then sample $H_R$ by randomly sparsifying $H^*_R$, selecting edges of $H^*_R$ with an appropriate probability. Crucially, in this intermediate random graph $H^*_R$, with high probability every edge is contained in only very few triangles. This is helpful, since we then have much fewer dependencies for different edges to lie on triangles, when further sparsifying $H^*_R$ to obtain $H_R$.

Formally, we consider independent binomial random graphs\margindef{{$H^*_R, H^*_B$}} $H^*_R\sim G(m,m^{-1/2})$ with vertex set $V_R=[m]$ and $H^*_B\sim G(m,m^{-1/2})$ with vertex set $V_B=[m]$ (also independent from $\pi_R$ and $\pi_B$), where $H^*_R$ is coupled with $H_R$ such that $H_R\su H^*_R$ always holds, and $H^*_B$ is coupled with $H_B$ such that $H_B\su H^*_B$ always holds. We furthermore choose these couplings in such a way that, conditioning on an outcome $\wh{H_R^*}$ of $H_R^*$, the distribution of the random graph $H_R$ is given by taking every edge of $\wh{H_R^*}$ into $H_R$ independently at random with probability $p/m^{-1/2}=1/{\log^{2}m}$ (also independent from all other random choices related to $\pi_R$, $\pi_B$, $H^*_B$, and $H_B$); and analogously for $H_B$ when conditioning on an outcome $\wh{H_B^*}$ of $H_B^*$.

Now, let \margindef{{$\mathcal T_R, \mathcal T_B$}} $\mathcal T_R$ be the event that every edge $e\in E(H_R^*)$ is contained in at most $20\log m$ triangles in $H_R^*$. Analogously, let $\mathcal T_B$ be the event that every edge $e\in E(H_B^*)$ is contained in at most $20\log m$ triangles in $H_B^*$. It is not hard to show that these events $\mathcal T_R$ and  $\mathcal T_B$ hold with high probability; this again follows from an easy Chernoff bound argument (see \cref{subsec:concentration ineqs} for the details).

\begin{lemma}\label{lem:easy chernoff triangles}
	The events $\mathcal T_R$ and $\mathcal T_B$ hold with high probability as $n \to \infty$.
\end{lemma}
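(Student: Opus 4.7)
\medskip
\noindent
The plan is to fix an arbitrary pair $\{u,v\} \in \binom{[m]}{2}$ and to analyze, conditional on $\{u,v\}$ being an edge of $H_R^*$, the number of triangles of $H_R^*$ through $\{u,v\}$. By the independence of edges in the binomial random graph $G(m,m^{-1/2})$, the number of triangles of $H_R^*$ containing a fixed edge $\{u,v\}$ is equal to the number of common neighbors of $u$ and $v$, which (conditional on $\{u,v\} \in E(H_R^*)$) has distribution $\Bin(m-2,m^{-1})$, since for every other vertex $w$, the edges $\{u,w\}$ and $\{v,w\}$ are each present independently with probability $m^{-1/2}$. In particular, this count has expectation $(m-2)/m < 1$.

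Next, I would invoke the standard upper tail Chernoff bound for a binomial: if $X\sim\Bin(N,q)$ and $k \geq eNq$, then $\pr{X \geq k} \leq (eNq/k)^k$. Applying this with $N=m-2$, $q=m^{-1}$, and $k = \lceil 20\log m\rceil$, we obtain
\[
	\cpr{\#\{\text{triangles of }H_R^*\text{ through }\{u,v\}\}\geq 20\log m}{\{u,v\}\in E(H_R^*)} \leq \paren*{\frac{e}{20\log m}}^{20\log m},
\]
which is $m^{-\omega(1)}$, hence in particular much smaller than $m^{-3}$ for all sufficiently large $m$. Multiplying by $\pr{\{u,v\} \in E(H_R^*)} = m^{-1/2} \leq 1$ and then union bounding over the at most $\binom{m}{2} \leq m^2$ pairs $\{u,v\}$, we conclude that the probability that some edge of $H_R^*$ lies in more than $20\log m$ triangles of $H_R^*$ tends to $0$ as $n\to\infty$ (recall $m=\Theta(n/\log^8 n)$, so $m\to\infty$ as well). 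Thus $\mathcal T_R$ holds with high probability; the analogous argument for $H_B^*$, with $\pi_R,\pi_B,H_R,H_B,H_R^*$ replaced by their blue counterparts, yields that $\mathcal T_B$ holds with high probability.

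There is essentially no obstacle here beyond verifying that the conditional distribution of the triangle count through a fixed edge really is binomial with the stated parameters; this just requires noting that the indicators $\1[\{u,w\} \in E(H_R^*)]\cdot \1[\{v,w\} \in E(H_R^*)]$, for the $m-2$ choices of $w \in [m]\setminus\{u,v\}$, depend on pairwise disjoint sets of edge variables and are therefore mutually independent, and are also independent of the event $\{u,v\}\in E(H_R^*)$. After that, the proof reduces to a single application of the Chernoff tail bound together with a trivial union bound.
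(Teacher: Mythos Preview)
Your proof is correct and follows essentially the same approach as the paper: identify the triangle count through a fixed pair as $\Bin(m-2,m^{-1})$, apply a Chernoff-type tail bound to get a probability below $m^{-3}$, and union bound over the $O(m^2)$ pairs. The only cosmetic differences are that you condition on $\{u,v\}$ being an edge (harmless, since the common-neighbor count is independent of that event) and you invoke the $(eNq/k)^k$ form of Chernoff rather than the paper's $\exp(-t/6)$ version.
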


As long as the event $\mathcal T_R$ holds, we have sufficiently good control on the dependencies between different edges to end up in triangles in $H_R$ and thus getting deleted when passing to $H_R'$. Specifically, for a list of pairs of edges of $K_m$ (corresponding to edge pairs of the form $(\{\pi_R(u_i),\pi_R(u_j)\},\{\pi_R(v_i),\pi_R(v_j)\})$ for $i\neq j$ in some potential odd connected pairing), we are interested in the probability that for at least one edge pair in the list both members are present as edges in $H_R$ and are both not on triangles in $H_R$. If this happens, both members will be present as edges in $H'_R$ and therefore lead to edges in $H_0$. Our next lemma shows that on the event $\mathcal T_R$, we are indeed quite likely to find one such edge pair in the list, with a probability estimate that is strong enough to union-bound over all potential odd connected pairings.

 \begin{lemma}\label{lemma: one connection survives}
    Fix $X\in \{R,B\}$ and $\gamma>0$, and assume that $n$ is sufficiently large with respect to~$\gamma$. Consider a graph $\Gamma$ with vertex set $\binom{[m]}2$ such that $e(\Gamma)\geq \gamma n^2$ and $\Delta_\Gamma\leq 4n^2/m^2$. Then we have
    \[\mathbb P\big[\mathcal T_X\text{ holds and there is no }\{e,f\} \in E(\Gamma)\text{ with }e,f \in E(H'_X)\big] \leq n^{-2n}.\]
\end{lemma}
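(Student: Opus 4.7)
The plan is to condition on the outcome of $H_X^*$ and apply Janson's inequality to a carefully chosen family of monotone events. Given an outcome $\wh H^*$ of $H_X^*$, the indicators $X_g := \1[g \in H_X]$ for $g \in E(\wh H^*)$ are i.i.d.\ Bernoulli$(q)$ with $q = 1/\log^2 m$. Let $\hat S(\wh H^*) \subseteq E(\wh H^*)$ be the set of edges of $\wh H^*$ lying in no triangle of $\wh H^*$. A key observation: any $g \in \hat S(\wh H^*)$ with $X_g = 1$ automatically lies in $E(H_X')$, since $H_X \su \wh H^*$ so any triangle of $H_X$ through $g$ would be a triangle of $\wh H^*$ through $g$. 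Hence for $\{e, f\} \in E(\Gamma)$ with $e, f \in \hat S(\wh H^*)$ and $X_e = X_f = 1$, the pair $\{e, f\}$ has both edges in $H_X'$.

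Let $\hat E(\wh H^*) := \{\{e, f\} \in E(\Gamma) : e, f \in \hat S(\wh H^*)\}$ and, for $\{e, f\} \in \hat E$, let $B_{\{e,f\}} := \{X_e = X_f = 1\}$. The events $B_{\{e,f\}}$ are monotone increasing in the $X_g$'s, and, by the observation above, it suffices to bound
\[\bE\bigl[\1_{\mathcal T_X} \cdot \Pr\bigl[\textstyle \bigcap_{\{e,f\} \in \hat E(\wh H^*)} \bar B_{\{e,f\}} \,\big|\, \wh H^*\bigr]\bigr] \leq n^{-2n}.\]
Conditional on $\wh H^*$, Janson's inequality yields $\Pr[\bigcap \bar B_{\{e,f\}} \,|\, \wh H^*] \leq \exp(-q^2 |\hat E(\wh H^*)| + q^3 P(\wh H^*))$, where $P(\wh H^*)$ counts unordered pairs of distinct edges of $\hat E(\wh H^*)$ sharing a common vertex in $\binom{[m]}{2}$.

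A marginal calculation, using $e(\Gamma) \geq \gamma n^2$, $\Delta_\Gamma \leq 4n^2/m^2$, and the Poisson-type estimate $\Pr[e \in \hat S] = \Theta(m^{-1/2})$, shows that $\bE |\hat E(\wh H^*)| = \Theta(n \log^8 n)$ and $\bE P(\wh H^*) = O(n^{1/2} \log^{28} n)$. For "typical" $\wh H^*$ the Janson exponent $q^2 |\hat E| - q^3 P$ is therefore $\Omega(n \log^4 n)$, far exceeding the required $2 n \log n$, so the Janson bound is at most $\exp(-\Omega(n \log^4 n))$, which is much smaller than $n^{-2n}$.

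The hard part will be ruling out "atypical" $\wh H^*$, namely those with anomalously small $|\hat E(\wh H^*)|$ or anomalously large $P(\wh H^*)$, whose total contribution to $\Pr[\mathcal T_X \cap \{\text{no pair survives}\}]$ must be bounded by $n^{-2n}/2$. This requires concentration at the extreme-tail level $e^{-\Omega(n \log n)}$ for the polynomials $|\hat E|$ and $P$ in the edge indicators of $H_X^* \sim G(m, m^{-1/2})$. I expect this step to require either a carefully crafted polynomial concentration inequality (in the spirit of Kim--Vu or Talagrand) or a restructuring of the event collection so that the dependency graph (in Janson's sense) becomes uniformly sparse; the event $\mathcal T_X$ itself should play a crucial role by constraining the triangle structure of $\wh H^*$ and thereby making the required concentration estimates tractable.
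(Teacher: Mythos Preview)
Your proposal is not a complete proof, and you acknowledge as much: the concentration step for $|\hat E(\wh H^*)|$ is left open. This gap is genuine and not easily filled along the lines you suggest. The quantity $|\hat E(\wh H^*)|$ is badly non-Lipschitz in the edge indicators of $H_X^*$: toggling a single edge $g=\{u,v\}$ changes the triangle-free status of $\{u,w\},\{v,w\}$ for every common neighbour $w$ of $u$ and $v$, of which there may be $\Theta(m)$ in the worst case, and each such edge can in turn affect up to $\Delta_\Gamma$ edges of $\hat E$. The worst-case single-edge influence on $|\hat E|$ is therefore of order $m\Delta_\Gamma=\Theta(n^2/m)$, the same order as $\bE|\hat E|$ itself, so bounded-difference and Kim--Vu type inequalities give nothing. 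Restricting to $\mathcal T_X$ would cap the influence at $O(\Delta_\Gamma\log m)$, but McDiarmid--Reed and its relatives require Lipschitz control over the whole product space, not merely on a high-probability event; making this restriction rigorous is exactly the missing work, and your choice to demand $e,f\in\hat S(\wh H^*)$ (no triangle in $\wh H^*$ at all, rather than no triangle in $H_X$) is what created the difficulty.

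The paper sidesteps this by decoupling the two stages of randomness differently. First, ignoring triangles entirely, it applies a Talagrand-type inequality to the \emph{matching number} of $\Gamma[E(H_X^*)]$---a function that is genuinely $1$-Lipschitz and $2$-certifiable in the edges of $H_X^*$---to conclude that with probability $1-\exp(-\Omega(n\log^8 n))$ there is a matching $M^*$ of size $\Omega(n\log^8 n)$ among pairs $\{e,f\}\in E(\Gamma)$ with $e,f\in E(H_X^*)$. Only then does it condition on an outcome $\wh{H_X^*}$ satisfying both this event and $\mathcal T_X$: on $\mathcal T_X$, each $\{e,f\}\in M^*$ has a ``triangle neighbourhood'' $S_{\{e,f\}}\subseteq E(\wh{H_X^*})$ of size at most $100\log m$, and a greedy argument extracts a submatching $M$ of size $\Omega(n\log^5 n)$ with all the $S_{\{e,f\}}$ pairwise disjoint. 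The events $S_{\{e,f\}}\cap E(H_X)=\{e,f\}$ are then literally independent over $\{e,f\}\in M$, each with probability at least $1/(2\log^4 m)$, so the probability that none occurs is at most $\exp(-3n\log n)$. This replaces your Janson step by exact independence, and replaces your missing concentration of $|\hat E|$ by concentration of a matching number---the structural point being that the matching number is robust under single-edge changes while ``number of pairs of triangle-free edges'' is not.
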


Note that for $X=R$, the event whose probability is considered in this lemma only depends on the outcomes of $H_R^*$ and $H_R$ (and not on the outcomes of $\pi_R$, $\pi_B$, $H_B^*$ and $H_B$), and similarly for $X=B$ the event only depends on the outcomes of $H_B^*$ and $H_B$.

\section{Proof of the main theorems}\label{sec:main proof}

In this section, we prove \cref{thm:main alpha=2,thm:main} assuming the probabilistic lemmas stated in the previous section. We will need two more combinatorial lemmas, which we prove in the first subsection. The second subsection contains the proof of \cref{thm:odd pairing} (assuming the probabilistic lemmas from \cref{sec:probabilistic lemmas}), from which we then finally deduce \cref{thm:main alpha=2,thm:main} in the last subsection.

\subsection{Combinatorial lemmas}
 Let us say that a pair $\set{u,v}$ of vertices $u,v \in [n]$ is \emph{closed by a red cherry}\margindef{{closed by red or blue cherry}} if there is a vertex $w \in [n]$ so that $\set{u,w},\set{v,w} \in E(H_0)$ and both these edges are red. We define what it means to be \emph{closed by a blue cherry} analogously. The pairs closed by red or blue cherries are precisely those pairs that we would be forced to delete in the creation of $H$ from $H_0$: if a pair $\set{u,v}$ closed by a red or blue cherry is included as an edge of $H_0$, it will not survive into $H$. 
\begin{lemma}\label{lem:closed by cherries}
    Assume that $n$ is sufficiently large. Then, whenever the events $\mathcal D_R$ and $\mathcal F_R$ hold, the number of pairs $\{u,v\} \in \binom{[n]}2$ closed by a red cherry in $H_0$ is at most $n^2/{\log n}$. Similarly, whenever the events $\mathcal D_B$ and $\mathcal F_B$ hold, the number of pairs $\{u,v\} \in \binom{[n]}2$ closed by a blue cherry in $H_0$ is at most $n^2/{\log n}$.
\end{lemma}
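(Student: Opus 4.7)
The plan is to count the unordered pairs $\{u,v\}\in\binom{[n]}{2}$ closed by a red cherry in $H_0$ by projecting the cherry through $\pi_R$ down to $V_R$, and then combining the maximum-degree bound from $\mathcal D_R$ with the fiber-size bound from $\mathcal F_R$. The blue case is completely symmetric, so I would only write the red one.

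First I would unwind the definition: a pair $\{u,v\}$ is closed by a red cherry precisely when there exists some $w\in[n]\setminus\{u,v\}$ such that the vertex $z:=\pi_R(w)\in V_R$ is a common neighbor of $\pi_R(u)$ and $\pi_R(v)$ in $H_R'$. Thus every such pair is witnessed by at least one $z\in V_R$, and it suffices to bound, for each $z$, the number of pairs $\{u,v\}$ with $\pi_R(u),\pi_R(v)\in N_{H_R'}(z)$, then sum over $z$.

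Next I would combine the two control assumptions. Since $H_R'\subseteq H_R$, on the event $\mathcal D_R$ every vertex $z\in V_R$ has $d_{H_R'}(z)\leq 2pm$. On the event $\mathcal F_R$, the preimage $\pi_R^{-1}(N_{H_R'}(z))$ has size at most $|N_{H_R'}(z)|\cdot (2n/m)\leq 4pn$. So for each $z$, the number of ordered pairs $(u,v)$ with $u\neq v$ and $\pi_R(u),\pi_R(v)\in N_{H_R'}(z)$ is at most $(4pn)^2=16p^2n^2$. Summing over the $m$ choices of $z$ and dividing by $2$ to pass to unordered pairs of distinct vertices gives a bound of at most $8p^2mn^2$ on the number of pairs closed by a red cherry.

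Finally, I would substitute the numerics. By the choice of $p$, we have $p^2m=1/\log^4 m$, so the bound becomes $8n^2/\log^4 m$. Since $m\geq n/\log^8 n$ forces $\log m\geq \tfrac12\log n$ once $n$ is large, this is bounded by $O(n^2/\log^4 n)$, which is comfortably below $n^2/\log n$ for all sufficiently large $n$. There is no real obstacle here: the argument is a short double-counting through $V_R$, and the summation over $z$ only loses a constant factor because a pair may be witnessed by several common neighbors, but we only need an upper bound. The only mild care is to remember that the bound on $d_{H_R}$ from $\mathcal D_R$ transfers to $H_R'$ simply because edge-deletion can only decrease degrees, and that the fiber bound from $\mathcal F_R$ is applied to the union of the fibers over $N_{H_R'}(z)$.
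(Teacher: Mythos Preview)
Your proof is correct and follows essentially the same approach as the paper's: both project the cherry through $\pi_R$ to $V_R$, use $\mathcal D_R$ to bound the number of common-neighbor configurations there, and use $\mathcal F_R$ to bound the number of lifts back to $[n]$, arriving at the same bound $O(p^2mn^2)=O(n^2/\log^4 m)$. The only cosmetic difference is that the paper first counts pairs $(x,y)\in V_R^2$ with a common neighbor and then lifts, whereas you parametrize directly by the common neighbor $z$; the computations are equivalent.
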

\begin{proof}    
	By symmetry it suffices to prove the first statement, regarding red cherries. If $\{u,v\} \in \binom{[n]}2$ is closed by a red cherry, this means that there exists $w$ such that $\set{u,w},\set{v,w}$ are red edges of $H_0$. This, in turn, means that $\set{\pi_R(u),\pi_R(w)}, \set{\pi_R(v),\pi_R(w)}$ are edges of $H_R'$ and hence of $H_R$, so $\pi_R(u)$ and $\pi_R(v)$ must have a common neighbor in the graph $H_R$. Since $\mathcal D_R$ holds, all vertices of $H_R$ have degree at most $2pm$, and so the number of pairs of vertices of $H_R$ with a common neighbor is at most $m\cdot (2pm)^2=4p^2m^3$. Furthermore, for every pair of vertices $x,y\in V_R$ with a common neighbor in $H_R$, there are only $|\pi_R^{-1}(x)|\leq 2n/m$ options for $u\in [n]$ with $\pi_R(u)=x$ and only $|\pi_R^{-1}(y)|\leq 2n/m$ options for $v\in [n]$ with $\pi_R(v)=y$, since the event $\mathcal F_R$ holds. This means that the total number of pairs $\{u,v\} \in \binom{[n]}2$ is closed by a red cherry is at most $4p^2m^3\cdot (2n/m)^2=16p^2mn^2=16n^2/{\log^{4} m}\leq n^2/{\log n}$ (recalling that $m=\lceil n/{\log^{8} n}\rceil$, $p=m^{-1/2}\log^{-2}m$, and $n$ is sufficiently large).
\end{proof}

Recall that our goal is to prove \cref{thm:odd pairing}. So we want to show that the graph $G$ we constructed is unlikely to contain an odd connected pairing of size at least $\eps n$. Recall that any given $s$-pairing $P=\{(u_1,v_1),\dots,(u_{s},v_{s})\}$ in $[n]$ with $s\geq \eps n$ forms an odd connected pairing in $G$ if and only if for all distinct $i,j\in [s]$, at least one of $\{u_i,u_j\}$ and $\{v_i,v_j\}$ is an edge of $G$. To understand whether the pairs $\{u_i,u_j\}$ and $\{v_i,v_j\}$ form edges in $G$, we need to consider their projections $\{\pi_R(u_i),\pi_R(u_j)\}$ and $\{\pi_R(v_i),\pi_R(v_j)\}$ to the vertex set of $H_R$ (and similarly for $H_B$). Our goal is to analyze the probability that for some $i,j\in [s]$ both $\{\pi_R(u_i),\pi_R(u_j)\}$ and $\{\pi_R(v_i),\pi_R(v_j)\}$ form edges of $H_R$, or more precisely, of $H'_R$ (because then $\{u_i,u_j\}$ and $\{v_i,v_j\}$ will be edges of $H_0$). For this analysis, the first important step is to show that a lot of pairs of vertex pairs of $H_R$ appear as $\big\{\{\pi_R(u_i),\pi_R(u_j)\}, \{\pi_R(v_i),\pi_R(v_j)\}\big\}$ for some $i,j\in [s]$. The following lemma shows this, assuming that $\pi_R$ is $\varepsilon$-respectful of $P$ and satisfies $\mathcal F_R$.

\begin{lemma}\label{lem:respectful yields many edges}
	Let $X\in \{R,B\}$, fix $0<\varepsilon<1$, and let $n$ be sufficiently large with respect to $\eps$. Let $P=\{(u_1,v_1),\dots,(u_{s},v_{s})\}$ be an $s$-pairing in $[n]$ such that $s\geq \eps n$. Consider an outcome $\wh{\pi_X}$ of the random map $\pi_X:[n] \to [m]$ which is $\varepsilon$-respectful of $P$ and such that the event $\mathcal F_X$ holds for $\pi_X=\wh{\pi_X}$.
    
    Let $\Pi$ be the set of all pairs $\big\{\{\wh{\pi_X}(u_i),\wh{\pi_X}(u_j)\}, \{\wh{\pi_X}(v_i),\wh{\pi_X}(v_j)\}\big\}$ for all $i,j\in [s]$ with $\wh{\pi_X}(u_i)\ne \wh{\pi_X}(u_j)$ and $\wh{\pi_X}(v_i)\ne \wh{\pi_X}(v_j)$ and also $\{\wh{\pi_X}(u_i),\wh{\pi_X}(u_j)\}\ne \{\wh{\pi_X}(v_i),\wh{\pi_X}(v_j)\}$. Then we have $|\Pi|\geq \eps^4n^2/6$.
\end{lemma}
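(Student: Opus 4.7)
The plan is to restrict attention to unordered pairs $\{i,j\}$ with both indices in the set $I\su[s]$ of size at least $\varepsilon s\geq\varepsilon^2 n$ provided by the $\varepsilon$-respectful property of $\wh{\pi_X}$. Recall that $\wh{\pi_X}(u_i)\neq\wh{\pi_X}(v_i)$ for each $i\in I$, and that the unordered pairs $\{\wh{\pi_X}(u_i),\wh{\pi_X}(v_i)\}$ are pairwise distinct as $i$ ranges over $I$; in particular the map $i\mapsto\{\wh{\pi_X}(u_i),\wh{\pi_X}(v_i)\}$ is injective on $I$. The argument proceeds in two steps: (i) show that most of the $\binom{|I|}{2}$ pairs $\{i,j\}\su I$ satisfy the three conditions in the definition of $\Pi$, and (ii) show that the resulting map from such ``good'' pairs to $\Pi$ has multiplicity at most $2$.

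For step (i), the key observation is that whenever $i\neq j$ lie in $I$ and satisfy $\wh{\pi_X}(u_i)\neq\wh{\pi_X}(u_j)$ and $\wh{\pi_X}(v_i)\neq\wh{\pi_X}(v_j)$, the third defining condition of $\Pi$ holds automatically: the equality $\{\wh{\pi_X}(u_i),\wh{\pi_X}(u_j)\}=\{\wh{\pi_X}(v_i),\wh{\pi_X}(v_j)\}$ would force either $\wh{\pi_X}(u_i)=\wh{\pi_X}(v_i)$ (contradicting $i\in I$) or $\{\wh{\pi_X}(u_i),\wh{\pi_X}(v_i)\}=\{\wh{\pi_X}(u_j),\wh{\pi_X}(v_j)\}$ (violating injectivity). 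The number of $\{i,j\}\su I$ violating one of the first two conditions is bounded using $\mathcal F_X$: each fiber of $\wh{\pi_X}$ has size at most $2n/m$, so a standard count gives at most $n^2/m$ such bad pairs of each type. Since $n^2/m=O(n\log^8 n)=o(n^2)$, the number of good pairs in $I$ is at least $\binom{|I|}{2}-2n^2/m$.

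For step (ii), fix $\{E,F\}\in\Pi$ with distinct $E=\{x,y\}$ and $F=\{z,w\}$. Any good $\{i,j\}\su I$ mapping to $\{E,F\}$ determines (up to swapping $i$ and $j$) one of four configurations of the quadruple $(\wh{\pi_X}(u_i),\wh{\pi_X}(v_i),\wh{\pi_X}(u_j),\wh{\pi_X}(v_j))$, specified by which of $E,F$ equals $\{\wh{\pi_X}(u_i),\wh{\pi_X}(u_j)\}$ and by the internal matching between $u$- and $v$-coordinates. Each configuration arises from at most one pair by injectivity on $I$. Moreover, among these four configurations, any two that would require some index in $I$ to have a given unordered value $\{\wh{\pi_X}(u),\wh{\pi_X}(v)\}$ with opposite orderings (for instance, one demanding $(x,z)$ and another $(z,x)$) cannot coexist; a short check shows the four configurations decompose into two disjoint conflicting pairs under this relation, so at most two of them can be realized simultaneously. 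Combining with step (i) yields $|\Pi|\geq\tfrac12\bigl(\binom{|I|}{2}-2n^2/m\bigr)\geq\varepsilon^4 n^2/6$ for $n$ sufficiently large.

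The main subtlety is the multiplicity bound in step (ii): a naive accounting gives multiplicity $4$ and hence only $|\Pi|\geq\varepsilon^4 n^2/12$, so matching the constant $\tfrac16$ of the lemma crucially requires exploiting the distinctness clause of $\varepsilon$-respectfulness, not merely the weaker condition $\wh{\pi_X}(u_i)\neq\wh{\pi_X}(v_i)$.
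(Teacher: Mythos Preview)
Your proposal is correct and follows essentially the same approach as the paper: both restrict to the index set $I$ given by $\varepsilon$-respectfulness, observe that for $i,j\in I$ the third defining condition of $\Pi$ is automatic, bound the number of bad pairs via $\mathcal F_X$, and then show that the map from good pairs $\{i,j\}\subseteq I$ to $\Pi$ has multiplicity at most $2$. The only cosmetic difference is in step~(ii): where you argue via four ordered configurations and a ``conflict'' relation, the paper observes more directly that the unordered set $\big\{\{\wh\pi_X(u_i),\wh\pi_X(v_i)\},\{\wh\pi_X(u_j),\wh\pi_X(v_j)\}\big\}$ must equal one of the two ``cross'' partitions $\{\{x,z\},\{y,w\}\}$ or $\{\{x,w\},\{y,z\}\}$, each of which pins down $\{i,j\}$ by injectivity---this is the same content packaged more succinctly.
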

We can imagine $\Pi$ in \cref{lem:respectful yields many edges} as a graph on the vertex set $\binom{[m]}{2}$. In this graph, for any $i,j\in [s]$, we draw an edge between $\{\wh{\pi_X}(u_i),\wh{\pi_X}(u_j)\}$ and $\{\wh{\pi_X}(v_i),\wh{\pi_X}(v_j)\}$, under the conditions that $\wh{\pi_X}(u_i)\ne \wh{\pi_X}(u_j)$ (as otherwise $\{\wh{\pi_X}(u_i),\wh{\pi_X}(u_j)\}$ does not correspond to an element of $\binom{[m]}{2}$), and similarly $\wh{\pi_X}(v_i)\ne \wh{\pi_X}(v_j)$ as well as $\{\wh{\pi_X}(u_i),\wh{\pi_X}(u_j)\}\ne \{\wh{\pi_X}(v_i),\wh{\pi_X}(v_j)\}$ (since we do not want to create loops in our graph on $\binom{[m]}{2}$). We do not draw parallel edges, so if some pair of vertices in $\binom{[m]}{2}$ appears as $\big\{\{\wh{\pi_X}(u_i),\wh{\pi_X}(u_j)\}, \{\wh{\pi_X}(v_i),\wh{\pi_X}(v_j)\}\big\}$ for multiple different choices of $i,j\in [s]$, we still only draw a single edge. The conclusion of the lemma is that this graph has at least $\eps^4n^2/6$ edges.

\begin{proof}[Proof of \cref{lem:respectful yields many edges}] Without loss of generality let $X=R$ (the case $X=B$ is analogous). Exclusively for this proof, we adopt the notation $\ol v$ to denote $\wh{\pi_R}(v)$. By the assumption that $\wh{\pi_R}:[n] \to [m]$ is $\varepsilon$-respectful of $P$, there exists a subset $I\su [s]$ of size $|I|\geq \eps s\geq \eps^2n$ 
such that $\ol{u_i} \neq \ol{v_i}$ for all $i \in I$, and furthermore the pairs $\{\ol{u_i},\ol{v_i}\}$ for $ i\in I$ are pairwise distinct. 

Note that for any $i,j\in I$ with $\ol{u_i}\ne\ol{u_j}$ and $\ol{v_i}\ne\ol{v_j}$, we also have
$\{\ol{u_i},\ol{u_j}\}\ne \{\ol{v_i},\ol{v_j}\}$.
Indeed, if $\{\ol{u_i},\ol{u_j}\}= \{\ol{v_i},\ol{v_j}\}$, then due to
$\ol{u_i}\ne \ol{v_i}$ and $\ol{u_j}\ne\ol{v_j}$, we would have
$\ol{u_i}=\ol{v_j}$ and $\ol{u_j}=\ol{v_i}$ and hence
$\{\ol{u_i},\ol{v_i}\}=\{\ol{u_j},\ol{v_j}\}$
for the distinct elements $i,j\in I$
(we must have $i\ne j$ because of $\ol{u_i}\ne\ol{u_j}$).
This would be a contradiction.

Recall that $\Pi$ is the set of all pairs $\big\{\{\ol{u_i},\ol{u_j}\}, \{\ol{v_i},\ol{v_j}\}\big\}$ for all $i,j\in [s]$ with $\ol{u_i}\ne \ol{u_j}$ and $\ol{v_i}\ne \ol{v_j}$ and also $\{\ol{u_i},\ol{u_j}\}\ne \{\ol{v_i},\ol{v_j}\}$. Now, let us just consider the pairs
$\{\{\ol{u_i},\ol{u_j}\}, \{\ol{v_i},\ol{v_j}\}\}$
for all $i,j\in I$ with $\ol{u_i}\ne\ol{u_j}$ and $\ol{v_i}\ne\ol{v_j}$ (recall that we then also have $\{\ol{u_i},\ol{u_j}\}\ne \{\ol{v_i},\ol{v_j}\}$). Together, these pairs form some subset $\Pi_I\su \Pi$,
and it suffices to show $|\Pi_I|\geq \eps^4 n^2/6$.

The key claim is that for any given $x,y,z,w\in [m]$, the pair
$\{\{x,y\}, \{z,w\}\}$
can appear as
$\{\{\ol{u_i},\ol{u_j}\}, \{\ol{v_i},\ol{v_j}\}\}$
for at most two different choices of $\{i,j\}\su I$.
Indeed, if
\[
\big\{\{x,y\}, \{z,w\}\big\}
=
\big\{\{\ol{u_i},\ol{u_j}\}, \{\ol{v_i},\ol{v_j}\}\big\},
\]
then the two pairs $\{\ol{u_i},\ol{v_i}\}$ and $\{\ol{u_j},\ol{v_j}\}$
must be either the two pairs $\{x,z\}$ and $\{y,w\}$ or the two pairs
$\{x,w\}$ and $\{y,z\}$.
More precisely,
$\{\{\ol{u_i},\ol{v_i}\}, \{\ol{u_j},\ol{v_j}\}\}$
must be either $\{\{x,z\}, \{y,w\}\}$ or $\{\{x,w\}, \{y,z\}
\}$.
Since the pairs $\{\ol{u_i},\ol{v_i}\}$ are distinct for all $i\in I$,
in either of these two cases there is at most one possibility for
$\{i,j\}\su I$.
Thus, in total, there are indeed at most two choices for $\{i,j\}\su I$.

Consequently, for any $\{i,j\}\su I$ with
$\ol{u_i}\ne\ol{u_j}$ and $\ol{v_i}\ne\ol{v_j}$, the pair
$\{\{\ol{u_i},\ol{u_j}\}, \{\ol{v_i},\ol{v_j}\}\}$
is an element of $\Pi_I$, and every element of $\Pi_I$ is obtained in this
way for at most two choices of $\{i,j\}\su I$.
Therefore we have
\[
|\Pi_I| \geq \frac{ \big| \big\{\{i,j\}\su I : \ol{u_i}\ne\ol{u_j},\, \ol{v_i}\ne\ol{v_j} \big\} \big|
}{2}.
\]
Among the total number $\binom{|I|}{2}$ of pairs $\{i,j\}\su I$,
the number of pairs with $\ol{u_i}=\ol{u_j}$ is at most
$|I|\cdot (2n/m)$.
Indeed, choosing any $j\in I$, in order to have $\ol{u_i}=\ol{u_j}$,
we need to choose $i\in I$ such that $u_i$ is a vertex in the set
$(\wh{\pi_R})^{-1}(x)$ for $x=\ol{u_j}$.
Due to the event $\mathcal F_R$ we have
$|(\wh{\pi_R})^{-1}(x)|\leq 2n/m$, so there are at most $2n/m$ choices for $u_i$
and hence for $i\in I$
(recall that the vertices $u_1,\dots,u_s,v_1,\dots,v_s$ are all distinct).
Similarly, the number of pairs $\{i,j\}\su I$ with
$\ol{v_i}=\ol{v_j}$ is at most $|I|\cdot (2n/m)$.
Thus,
\begin{align*}
\big|
\big\{\{i,j\}\su I :
\ol{u_i}\ne\ol{u_j},\,
\ol{v_i}\ne\ol{v_j}
\big\}
\big|
&\geq
\binom{|I|}{2}
- |I|\cdot \frac{2n}{m}
- |I|\cdot \frac{2n}{m} \\
&=
|I|\left(\frac{|I|-1}{2}-\frac{4n}{m}\right) \\
&\geq
\eps^2 n
\left(\frac{\eps^2 n-1}{2}-\frac{4n}{m}\right)
\geq
\frac{\eps^4 n^2}{3},
\end{align*}
using that $n$ and hence $m$ is sufficiently large with respect to $\eps$.
This implies
$|\Pi|\geq |\Pi_I|\geq \eps^4 n^2/6$, as desired.
\end{proof}

\subsection{Proof of Theorem \ref{thm:odd pairing}}
Given the results of the previous subsection, we are now ready for the proof of \cref{thm:odd pairing}. 

\begin{proof}[Proof of \cref{thm:odd pairing}, assuming the lemmas in \cref{sec:probabilistic lemmas}]
    Note that we may assume without loss of generality that the given fixed value of $\eps$ satisfies $0<\eps<1/10$ (since the statement in \cref{thm:odd pairing} for some value of $\eps$ automatically implies the same statement for all larger values of $\eps$). Let us define $\gamma := \eps^4/10>0$. We are assuming that $n$ is sufficiently large with respect to $\eps$, so $n$ is also large with respect to $\gamma$ (in particular, large enough for the statement in \cref{lemma: one connection survives} to hold).
    
     Recall that the random graph $G$ constructed in \cref{sec:construction} has $n$ vertices and satisfies $\alpha(G)\leq 2$. Our goal is to show that, as $n\to\infty$, with high probability $G$ does not have an odd connected pairing of size at least $\eps n$. This in particular implies that for sufficiently large $n$ (sufficiently large with respect to $\eps$), there exists an $n$-vertex graph $G$ with $\alpha(G)\leq 2$ which does not contain an odd connected pairing of size at least $\eps n$.
    
    Letting $s=\lceil \eps n\rceil$, observe that $G$ has an odd connected pairing of size at least $\eps n$ if and only if it has an odd connected pairing of size $s$, meaning that some $s$-pairing in $[n]$ forms an odd connected pairing in $G$. So we can observe that
    \begin{align*}
    &\pr{G\text{ has an odd connected pairing of size at least }\eps n}\\
    &\leq \pr{\mathcal F_R^\comp}+\pr{\mathcal F_B^\comp}+\pr{\mathcal D_R^\comp}+\pr{\mathcal D_B^\comp}+\pr{\mathcal T_R^\comp}+\pr{\mathcal T_B^\comp}+\pr{\cE(\eps)^\comp}\\
    &\qquad\qquad+\pr{\mathcal F_R, \mathcal F_B, \mathcal D_R, \mathcal D_B, \mathcal T_R, \mathcal T_B, \cE(\eps)\text{ hold and }G\text{ has an odd connected pairing of size }s}\\
    &\leq \pr{\mathcal F_R^\comp}+\pr{\mathcal F_B^\comp}+\pr{\mathcal D_R^\comp}+\pr{\mathcal D_B^\comp}+\pr{\mathcal T_R^\comp}+\pr{\mathcal T_B^\comp}+\pr{\cE(\eps)^\comp}\\
    &\qquad\qquad+\sum_P\pr{\mathcal F_R, \mathcal F_B, \mathcal D_R, \mathcal D_B, \mathcal T_R, \mathcal T_B, \cE(\eps)\text{ hold and }P\text{ is an odd connected pairing in }G},
    \end{align*}
    where the sum is taken over all $s$-pairings $P$ in $[n]$. Note that the number of such $s$-pairings is at most $n^n$. Thus, it suffices to prove
    \begin{equation}\label{eq:to-show-one-P}
        \pr{\mathcal F_R, \mathcal F_B, \mathcal D_R, \mathcal D_B, \mathcal T_R, \mathcal T_B, \cE(\eps)\text{ hold and }P\text{ is an odd connected pairing in }G}\leq n^{-2n}
    \end{equation}
    for every $s$-pairing $P$ in $[n]$. Indeed, then using \cref{lem:easy chernoff,lem:random coupling,lem:easy chernoff triangles}, we can conclude that
    \begin{align*}
    &\pr{G\text{ has an odd connected pairing of size at least }\eps n}\\
    &\qquad\leq o(1)+o(1)+o(1)+o(1)+o(1)+o(1)+o(1)+n^n\cdot n^{-2n}\\
    &\qquad=o(1)+n^{-n}=o(1),
    \end{align*}
    as desired.

    To prove \cref{eq:to-show-one-P} consider an $s$-pairing $P=\{(u_1,v_1),\dots,(u_{s},v_{s})\}$ in $[n]$. It suffices to prove a probability bound as in \cref{eq:to-show-one-P} conditional on any outcomes $\wh{\pi_R}$ and $\wh{\pi_B}$ of the random maps $\pi_R$ and $\pi_B$. If one of the events $\mathcal F_R$,  $\mathcal F_B$ and $\cE(\eps)$ fails for these outcomes $\wh{\pi_R}$ and $\wh{\pi_B}$, the relevant conditional probability is zero. So we may assume that $\mathcal F_R$,  $\mathcal F_B$ and $\cE(\eps)$ hold when $\pi_R=\wh{\pi_R}$ and $\pi_B=\wh{\pi_B}$. Then, by definition of the event $\cE(\eps)$, at least one of the  functions $\wh{\pi_R}$ and $\wh{\pi_B}$ must be $\varepsilon$-respectful of $P$. Let us assume without loss of generality that $\wh{\pi_R}$ is $\varepsilon$-respectful of $P$ (the other case is analogous).

    We can now apply \cref{lem:respectful yields many edges}, since $\wh{\pi_R}:[n]\to [m]$ is $\varepsilon$-respectful of the $s$-pairing $P=\{(u_1,v_1),\dots,(u_{s},v_{s})\}$ (with $s\geq \eps n$), and $\mathcal{F}_R$ holds for $\pi_R=\wh{\pi_R}$. As in the lemma statement, let  $\Pi$ be the set of all pairs $\big\{\{\wh{\pi_R}(u_i),\wh{\pi_R}(u_j)\}, \{\wh{\pi_R}(v_i),\wh{\pi_R}(v_j)\}\big\}$ for all $i,j\in [s]$ with $\wh{\pi_R}(u_i)\ne\wh{\pi_R}(u_j)$ and $\wh{\pi_R}(v_i)\ne\wh{\pi_R}(v_j)$ and also $\{\wh{\pi_R}(u_i),\wh{\pi_R}(u_j)\}\ne \{\wh{\pi_R}(v_i),\wh{\pi_R}(v_j)\}$. Then by \cref{lem:respectful yields many edges} we have $|\Pi|\geq \eps^4n^2/6$.

    Let us now also condition on any fixed outcome $\wh{H_B}$ of the random graph $H_B$. If the event $\mathcal D_B$ fails for $H_B=\wh{H_B}$, the relevant probability we want to bound is again zero. So we may assume that $\mathcal D_B$ holds for $H_B=\wh{H_B}$. Thus, we can apply \cref{lem:closed by cherries}, and conclude that at most $n^2/{\log n}$ pairs $\{u,v\} \in \binom{[n]}2$ are closed by a blue cherry in $H_0$ whenever $H_B=\wh{H_B}$ and $\pi_B=\wh{\pi_B}$. Note that the outcomes of $\pi_B$ and $H_B$ completely determine the blue edges in $H_0$, and they therefore determine which pairs of vertices are closed by a blue cherry in $H_0$.
    
    Let $\Pi_\Gamma\su \Pi$ be the set of all pairs $\big\{\{\wh{\pi_R}(u_i),\wh{\pi_R}(u_j)\}, \{\wh{\pi_R}(v_i),\wh{\pi_R}(v_j)\}\big\}$ for all $i,j\in [s]$ with $\wh{\pi_R}(u_i)\ne\wh{\pi_R}(u_j)$ and $\wh{\pi_R}(v_i)\ne\wh{\pi_R}(v_j)$ and $\{\wh{\pi_R}(u_i),\wh{\pi_R}(u_j)\}\ne \{\wh{\pi_R}(v_i),\wh{\pi_R}(v_j)\}$, such that neither $\{u_i,u_j\}$ nor $\{v_i,v_j\}$ is closed by a blue cherry in $H_0$ when $H_B=\wh{H_B}$ and $\pi_B=\wh{\pi_B}$. Since the number of $\{i,j\}\su [s]$ such that $\{u_i,u_j\}$ or $\{v_i,v_j\}$ is closed by such a blue cherry is at most $n^2/{\log n}$, we find that
    \[|\Pi_\Gamma|\geq |\Pi|-\frac{n^2}{\log n}\geq \frac{\eps^4n^2}{6}-\frac{n^2}{\log n}\geq \gamma n^2,\]
    recalling that $\gamma=\eps^4/10$ and using the assumption that $n$  is sufficiently large with respect to $\eps$. Furthermore, the set $\Pi_\Gamma$ is determined by $\wh{\pi_R}$, $\wh{\pi_B}$ and $\wh{H_B}$ (and $P$).

    Now let $\Gamma$ be the graph with vertex set $\binom{[m]}{2}$ and edge set $\Pi_\Gamma$. Then $e(\Gamma)=|\Pi_\Gamma|\geq \gamma n^2$. We furthermore claim that $\Delta_\Gamma\leq 4n^2/m^2$. Indeed, for any given $\{x,y\}\in \binom{[m]}{2}$, due to $\mathcal F_R$ holding for $\pi_R=\wh{\pi_R}$, we have $|(\wh{\pi_R})^{-1}(x)|\leq 2n/m$ and $|(\wh{\pi_R})^{-1}(y)|\leq 2n/m$, and hence there can be at most $(2n/m)^2=4n^2/m^2$ pairs $\{i,j\}\su [s]$ with $\{\wh{\pi_R}(u_i),\wh{\pi_R}(u_j)\}=\{x,y\}$ or $\set{\wh{\pi_R}(v_i),\wh{\pi_R}(v_j)}=\{x,y\}$. Thus, $\{x,y\}$ appears in at most $4n^2/m^2$ of the pairs $\big\{\{\wh{\pi_R}(u_i),\wh{\pi_R}(u_j)\}, \{\wh{\pi_R}(v_i),\wh{\pi_R}(v_j)\}\big\}$ forming $\Pi$. In particular, each $\{x,y\}\in \binom{[m]}{2}$ appears in at most $4n^2/m^2$ pairs in $\Pi_\Gamma$, and hence $\{x,y\}$ as a vertex of $\Gamma$ has degree at most $4n^2/m^2$. This shows $\Delta_\Gamma\leq 4n^2/m^2$, as claimed.

    Thus, we can apply \cref{lemma: one connection survives} to the graph $\Gamma$ on the vertex set $\binom{[m]}{2}$. We find that
    \begin{equation}\label{eq:conclusion-lemma-44}
    \mathbb P\big[\mathcal T_R\text{ holds and there is no }\{e,f\} \in E(\Gamma)\text{ with }e,f \in E(H'_R)\big] \leq n^{-2n}.
     \end{equation}
    Finally, recall that our goal was to show that, conditional on our fixed outcomes of $\pi_R$, $\pi_B$ and $H_B$ we have (see \cref{eq:to-show-one-P})
    \begin{multline*}
    \mathbb{P}[\mathcal F_R, \mathcal F_B, \mathcal D_R, \mathcal D_B, \mathcal T_R, \mathcal T_B, \cE(\eps)\text{ hold}\\
    \text{and }P\text{ is an odd connected pairing in }G\mid\pi_R=\wh{\pi_R},\pi_B=\wh{\pi_B},H_B=\wh{H_B}]\leq n^{-2n}.
    \end{multline*}
    We claim that whenever there is a pair $\{e,f\} \in E(\Gamma)$ with $e,f \in E(H'_R)$, and $\pi_R=\wh{\pi_R}$, $\pi_B=\wh{\pi_B}$, and $H_B=\wh{H_B}$, the pairing $P$ cannot be an odd connected pairing in $G$. Indeed, as $\{e,f\} \in E(\Gamma)=\Pi_\Gamma$, we must have $\{e,f\}=\big\{\{\pi_R(u_i),\pi_R(u_j)\}, \{\pi_R(v_i),\pi_R(v_j)\}\big\}$ for some indices $i,j\in [s]$ with $\pi_R(u_i)\ne\pi_R(u_j)$ and $\pi_R(v_i)\ne\pi_R(v_j)$ and $\{\pi_R(u_i),\pi_R(u_j)\}\ne \{\pi_R(v_i),\pi_R(v_j)\}$, such that neither $\{u_i,u_j\}$ nor $\{v_i,v_j\}$ is closed by a blue cherry in $H_0$. As $e,f \in E(H'_R)$, we now have $\{\pi_R(u_i),\pi_R(u_j)\}, \{\pi_R(v_i),\pi_R(v_j)\}\in E(H'_R)$. By construction of $H_0$, this means that $\{u_i,u_j\}, \{v_i,v_j\}\in E(H_0)$ are red edges of $H_0$. Since neither $\{u_i,u_j\}$ nor $\{v_i,v_j\}$ is closed by a blue cherry, neither of these red edges gets deleted when passing from $H_0$ to $H$ and so we have $\{u_i,u_j\}, \{v_i,v_j\}\in E(H)$. Recalling that $G$ is the complement of the graph $H$, this means that $\{u_i,u_j\}, \{v_i,v_j\}\not\in E(G)$. Therefore $P$ is not an odd connected pairing in $G$. Thus, we can conclude that
    \begin{align*}
        &\mathbb P[\mathcal F_R, \mathcal F_B, \mathcal D_R, \mathcal D_B, \mathcal T_R, \mathcal T_B, \cE(\eps)\text{ hold}\\
        &\qquad\qquad\text{ and }P\text{ is an odd connected pairing in }G \mid {\pi_R=\wh{\pi_R},\pi_B=\wh{\pi_B},H_B=\wh{H_B}}]\\
        &\leq \cpr{\mathcal T_R\text{ holds and }P\text{ is an odd connected pairing in }G}{\pi_R=\wh{\pi_R},\pi_B=\wh{\pi_B},H_B=\wh{H_B}}\\
        &\leq \cpr{\mathcal T_R\text{ holds and there is no }\{e,f\} \in E(\Gamma)\text{ with }e,f \in E(H'_R)}{\pi_R=\wh{\pi_R},\pi_B=\wh{\pi_B},H_B=\wh{H_B}}\\
        &= \pr{\mathcal T_R\text{ holds and there is no }\{e,f\} \in E(\Gamma)\text{ with }e,f \in E(H'_R)}\\
        &\leq n^{-2n},
    \end{align*}
    where in the penultimate step we used that the events $\mathcal{T}_R$ and $e,f \in E(H'_R)$ for $\{e,f\} \in E(\Gamma)$ only depend on the outcome of the random graphs $H^*_R$ and $H_R$, and are therefore not affected by the conditioning on the outcomes of $\pi_R,\pi_B,H_B$ (which are independent of $H^*_R$ and $H_R$). Furthermore, in the last step we used \cref{eq:conclusion-lemma-44}. This is precisely what we wanted to prove.\end{proof}

\subsection{Proof of Theorems \ref{thm:main} and \ref{thm:main alpha=2}}\label{sec:deduce main thm}
Finally, in order to complete the proofs of \cref{thm:main,thm:main alpha=2}, in this section we give the short deterministic argument showing that \cref{thm:odd pairing} implies \cref{thm:main alpha=2}. At the end of this section, we also include the almost-immediate deduction of \cref{thm:main} from \cref{thm:main alpha=2}.

\begin{proof}[Proof of \cref{thm:main alpha=2}, using \cref{thm:odd pairing}]
	Fix $0<\eta<1$. Our goal is to show that if $n$ is sufficiently large, there is an $n$-vertex graph $G$ with $\alpha(G)\leq 2$ which does not have $K_t$ as an odd minor for $t=\lceil (\frac13+\eta)n\rceil$. Let $\eps= \eta/2$, and let $G$ be an $n$-vertex graph given by \cref{thm:odd pairing}, so that $\alpha(G)\leq 2$ and $G$ does not have an odd connected pairing of size at least $\eps n$. 
    It remains to prove that $G$ does not have $K_t$ as an odd minor.

	So suppose for contradiction that there are vertex-disjoint trees $T_{1},\dots,T_t\subseteq G$ and colorings $\chi_{1},\dots,\chi_t$ as in the definition of an odd minor. Then each of the trees  $T_{1},\dots,T_t$ has at least one vertex. Furthermore $\ab{V(T_1)}+\dots+\ab{V(T_t)}\leq n$, and hence there can be at most $n/3$ indices $i \in [t]$ with $\ab{V(T_i)}\geq 3$. As a consequence, at least $t-(n/3)\geq (\frac13+\eta)n-(n/3)=\eta n$ of the trees $T_{1},\dots,T_t$ have at most two vertices. In particular, letting $s=\lceil \eta n/2\rceil<n/2$, we find that at least $s$ of the trees have exactly two vertices, or at least $s$ of the trees have exactly one vertex.

	Suppose that the first happens, so without loss of generality $T_1,\dots,T_s$ have exactly two vertices each. For every $i \in [s]$, we know that $\chi_i$ is a proper $2$-coloring of the edge $T_i$, hence one vertex of $T_i$ receives color white and the other color black. We can thus define $u_i$ to be the white vertex of $T_i$, and $v_i$ the black vertex. Then $\set{(u_1,v_1),\dots,(u_s,v_s)}$ define an odd connected pairing of size $s$: for all $i\neq j$, we know that there is some monochromatic (with respect to $\chi_i,\chi_j$) edge between $T_i$ and $T_j$, and this edge must be either $\set{u_i, u_j}$ or $\set{v_i, v_j}$.

	In the second case, suppose without loss of generality that $T_1,\dots,T_s$ each consist of a single vertex. Then the fact that these $s$ vertices form an odd $K_s$ minor implies that, for every $i \neq j$, there is an edge between the single vertex of $T_i$ and the single vertex of $T_j$. In particular, we have actually found that $K_s \subseteq G$. Now, label the vertices in $T_1,\dots,T_s$ as $u_1,\dots,u_{s}$, and choose $s$ other arbitrary vertices $v_1,\dots,v_s$, which we may do since $s<n/2$. Then $\{(u_1,v_1),\dots,(u_s,v_s)\}$ is an odd connected pairing since for any distinct $i,j\in [s]$ there is an edge between $u_i$ and $u_j$.

	Thus, in either case, we have found that $G$ contains an odd connected pairing of size $s$. Noting that $s\geq \eta n/2=\eps n$, this is a contradiction to our choice of $G$.
\end{proof}

We end this section by showing that \cref{thm:main} follows from \cref{thm:main alpha=2}.

\begin{proof}[Proof of \cref{thm:main}, using \cref{thm:main alpha=2}]
Let $\delta>0$, and without loss of generality assume $\delta<1$. Set $\eta:=\delta/5>0$. Now, for any sufficiently large $t$, we can apply \cref{thm:main alpha=2} to $n=\lceil (3-10\eta)t\rceil$ and $\eta$, and obtain an $n$-vertex graph $G$ with $\alpha(G)\leq 2$ not having $K_t$ as an odd minor (here, we used that $t\geq (\frac{1}{3}+\eta)(3-9\eta)t\geq (\frac{1}{3}+\eta)n$, if $t$ is sufficiently large). We conclude by observing that $\chi(G)\geq n/\alpha(G)\geq n/2\geq (\frac{3}{2}-5\eta)t=(\frac{3}{2}-\delta)t$.
\end{proof}

\section{Properties of the random functions}\label{sec:coupling}
In this section, we provide the proof of~\cref{lem:random coupling}.
\begin{proof}[Proof of \cref{lem:random coupling}]
		Fix $0<\eps<1/10$, and let $s=\lceil \eps n\rceil$. We need to show that as $n \to \infty$, it holds with high probability that for each $s$-pairing $P$ in $[n]$ at least one of the random functions $\pi_R$ and $\pi_B$ is $\varepsilon$-respectful of $P$. We may in particular assume that $n$ is sufficiently large with respect to $\eps$.
        
        We begin by fixing an $s$-pairing $P = \{(u_1,v_1),\dots(u_{s},v_{s})\}$ in $[n]$, and letting $\pi:[n]\to [m]$ be a uniformly random function. We claim that 
	\begin{equation}\label{eq:one pi}
		\pr{\pi \text{ is not }\varepsilon\text{-respectful of }P} \leq n^{-(5/8)\eps n}.
	\end{equation}
	First, note that if $\pi$ is not $\varepsilon$-respectful of $P$, then this means that there is a set $T \subseteq \binom{[m]}2$ of size $\ab T =  \lfloor \varepsilon s\rfloor$ such that for all $i \in [s]$ we have $\{\pi(u_i),\pi(v_i) \} \in T$ or $\pi(u_i)=\pi(v_i)$.

	For a fixed set $T \subseteq \binom{[m]}2$ of size $\ab T  =  \lfloor \varepsilon s\rfloor$ and a fixed index $i \in [s]$, we have
    \[\pr{\{\pi(u_i),\pi(v_i)\} \in T \text{ or }\pi(u_i)=\pi(v_i)} \leq \pr{\pi(u_i)=\pi(v_i)} + \pr{\{\pi(u_i),\pi(v_i)\}\in T}.\]
    The first summand on the right-hand side is exactly $1/m$, and the second summand is exactly $2|T|/m^2$. Indeed, $(\pi(u_i),\pi(v_i))\in [m]^2$ is uniformly distributed among all $m^2$ options, and precisely $2|T|$ of these options correspond to pairs in $T \subseteq \binom{[m]}2$ (note that for every pair in $T$ there are two possible orderings, meaning that there are two corresponding elements of $[m]^2$). In total, we find that
    \[
        \pr{\{\pi(u_i),\pi(v_i)\} \in T \text{ or }\pi(u_i)=\pi(v_i)} \leq \frac{1}{m}+\frac{2|T|}{m^2}=\frac{2|T|+m}{m^2}\leq \frac{2\eps^2 n+2+m}{m^2}\leq \frac{3\eps^2 n}{m^2}\leq n^{-7/8}
    \]
    for every $i \in [s]$ (here, we used that $\ab T  =  \lfloor \varepsilon s\rfloor\leq \eps(\eps n+1)< \eps^2 n+1$ and $m=\lceil n/(\log n)^{8}\rceil$, and that $n$ is sufficiently large with respect to $\eps$). Now, the outcomes of $\pi(u_i)$ and $\pi(v_i)$ are independent over all the different $i \in [s]$. Therefore, we obtain
	\[
		\pr[\big]{\text{for all }i \in [s]\text{ we have }\{\pi(u_i),\pi(v_i)\} \in T \text{ or }\pi(u_i)=\pi(v_i)} \leq (n^{-7/8})^{s}\leq n^{-(7/8)\eps n}.
	\]
	Applying the union bound over the
    \[\binom{\binom m2}{\lfloor \varepsilon s\rfloor}\leq \binom m2^{\varepsilon s}\leq (n^2)^{\eps^2n+1}= n^{2\eps^2n+2}\leq n^{\eps n/4}\]
    choices of $T\subseteq \binom{[m]}2$ of size $\ab T =  \lfloor \varepsilon s\rfloor$ (where we again used that $n$ is sufficiently large with respect to $\eps$), we find that
    \[\pr{\pi \text{ is not }\varepsilon\text{-respectful of }P} \leq  n^{\eps n/4}\cdot n^{-(7/8)\eps n}= n^{-(5/8)\eps n},\]
	proving \eqref{eq:one pi}. As a consequence of \eqref{eq:one pi}, we see that for the fixed pairing $P$, we have
    \[\pr{\pi_R \text{ and }\pi_B\text{ are both not }\varepsilon\text{-respectful of }P} = \big(\pr{\pi \text{ is not }\varepsilon\text{-respectful of }P}\big)^2\leq n^{-(5/4)\eps n},\]
	since $\pi_R,\pi_B$ are independent and both uniformly random among all maps $[n]\to [m]$. Finally, 
	we note that the number of $s$-pairings $P$ in $[n]$ is 
	at most $\binom n{s}^2 s!$, as we may pick the set $\{u_1,\dots,u_{s}\}$, then pick the set $\{v_1,\dots,v_{s}\}$, and finally pick a bijection between them encoding the pairs $\{u_i,v_i\}$. This number is
	\[
		\binom n{s}^2 s! \leq 2^{2n} s^{s} \leq 2^{2n} n^{s} \leq 2^{2n} n^{\eps n+1}=n^{\eps n+1+2n/{\log_2 n}}\leq n^{(9/8)\eps n}.
	\]
	Applying a union bound over all choices of $P$, we find that
    \begin{align*}
    &\pr{\text{for some $s$-pairing }P\text{ in }[n]\text{ neither }\pi_R\text{ nor }\pi_B\text{ is $\eps$-respectful of }P}\\
    &\qquad\qquad\qquad\qquad\qquad\qquad\qquad\qquad\qquad\qquad\qquad\qquad\leq n^{(9/8)\eps n}\cdot n^{-(5/4)\eps n}=n^{-\eps n/8}.
    \end{align*}
    Thus, for $n$ sufficiently large with respect to $\eps$, the probability that the event $\cE(\eps)$ fails is at most $n^{-\eps n/8}$. In particular, $\cE(\eps)$ holds with high probability as $n\to \infty$.
\end{proof}

\section{Properties of the random graphs}\label{sec:connection survives}
Our main goal in this section is to prove \cref{lemma: one connection survives}; in \cref{subsec:concentration ineqs} we also prove the simple \cref{lem:easy chernoff triangles,lem:easy chernoff}.

\subsection{A preparatory lemma}
In order to prove \cref{lemma: one connection survives}, we will need \cref{lem:first-two-sparsifications} below. Before stating and proving this lemma,
we state a concentration inequality, closely related to Talagrand's inequality \cite{MR1361756}, which was proved by McDiarmid and Reed~\cite[Theorem~2]{MR2268235}. 

\begin{lemma}\label{lemma: talagrand}
Let $r>0$, let $X_1,\ldots,X_N$ be independent random variables taking values in a finite set $S$, and let $X=f(X_1,\ldots,X_N)$ for some $f\colon S^N\to\bR_{\ge 0}$. Suppose that for all~$i\in [N]$ and $\mathbf{x}\in S^N$, changing the~$i$-th coordinate of~$\mathbf{x}$ changes the value of~$f(\mathbf{x})$ by at most~$1$. Furthermore, suppose that for every $\mathbf{x}\in S^N$ there exists a subset $J\subseteq [N]$ with $|J|\le rf(\mathbf{x})$ such that every $\mathbf{y}\in S^N$ with $\mathbf{y}_i=\mathbf{x}_i$ for all $i\in J$ satisfies $f(\mathbf{y})\ge f(\mathbf{x})$. Then, for all~$t\ge 0$, we have
  \begin{equation*}
    \pr[\big]{X\leq \ex{X}-t}\leq \exp\paren[\bigg]{-\frac{t^2}{2\cdot (r\ex{X}+t/3)}}.
  \end{equation*}
\end{lemma}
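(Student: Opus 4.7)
The plan is to prove this Bernstein-type lower-tail bound via Talagrand's convex distance inequality, in the classical style used for ``certifiable'' (or configuration-type) functions. The Lipschitz and certificate hypotheses combine synergetically: to decrease $f$ by a lot, one must flip many coordinates inside the (small) certifying set.

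First I would define the ``bad set'' $A := \{y \in S^N : f(y) \le \ex{X} - t\}$ and derive a pointwise lower bound on the Talagrand convex distance $d_T(x, A)$ for any $x$ with $f(x) > \ex{X} - t$. Letting $J = J(x)$ be the certificate guaranteed by the hypotheses, take the weight vector with $\alpha_i = |J|^{-1/2}$ for $i \in J$ and $\alpha_i = 0$ otherwise, so that $\|\alpha\|_2 = 1$. For any $y \in A$, set $K := \{i \in J : y_i \ne x_i\}$. The intermediate vector $x'$ agreeing with $x$ on $J$ and with $y$ off $J$ satisfies $f(x') \ge f(x)$ by the certificate property, and the Lipschitz hypothesis then gives $f(y) \ge f(x') - |K| \ge f(x) - |K|$. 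Since $y \in A$, this forces $|K| \ge f(x) - \ex{X} + t$, and hence
\[
    d_T(x, A) \;\ge\; \frac{f(x) - \ex{X} + t}{\sqrt{|J|}} \;\ge\; \frac{f(x) - \ex{X} + t}{\sqrt{r f(x)}}.
\]

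Next I would invoke Talagrand's inequality in the form $\pr{X \in A} \cdot \pr{d_T(X, A) \ge u} \le \exp(-u^2/4)$. Applied first at the median $m := \mathrm{med}(X)$, on the event $\{f(X) \ge m\}$ (which has probability $\ge 1/2$) the pointwise bound simplifies, after checking the monotonicity of $a \mapsto (a - m + t)/\sqrt{a}$, to $d_T(X, A) \ge t/\sqrt{rm}$; this yields a preliminary Bernstein-type estimate $\pr{f(X) \le m - t} \le 2\exp(-t^2/(4rm))$, from which a short bootstrap (integrating this tail bound) gives $|\ex{X} - m| = O(\sqrt{r \ex{X}})$ and allows one to restate the bound around $\ex{X}$. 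The main obstacle is then upgrading the variance-only denominator $4r\ex{X}$ to the sharper Bernstein form $2(r\ex{X} + t/3)$, which improves the estimate in the large-$t$ regime. This is achieved by re-running the convex-distance argument with a carefully chosen threshold $b$---approximately $\ex{X}$ when $t$ is small relative to $r\ex{X}$, and approximately $\ex{X} + t/3$ when $t$ is large---and then performing the standard Bennett-Bernstein convex analysis of the expression $(b - \ex{X} + t)^2/b$. The specific coefficient $1/3$ emerges as the optimizer of this expression, in complete analogy with the classical Bernstein inequality for bounded summands.
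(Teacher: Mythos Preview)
The paper does not prove this lemma at all: it is quoted verbatim as a black box from McDiarmid and Reed \cite[Theorem~2]{MR2268235}, with an attribution to Talagrand's inequality for the underlying method. So there is no ``paper's own proof'' to compare against; you are supplying a proof where the paper simply cites one.

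That said, your outline is the correct one and matches the method underlying the cited reference. The first half --- choosing the certificate $J(x)$, the weight vector $\alpha = |J|^{-1/2}\mathbf{1}_J$, the intermediate point $x'$, and deducing $d_T(x,A) \ge (f(x)-c+t)/\sqrt{r f(x)}$ for $A=\{f\le c-t\}$ --- is exactly right, and combining it with Talagrand's product inequality $\pr{A}\cdot\pr{d_T(X,A)\ge u}\le e^{-u^2/4}$ at the median gives the preliminary sub-Gaussian bound $\pr{X\le m-t}\le 2e^{-t^2/(4rm)}$ you state. The monotonicity check of $a\mapsto (a-c+t)/\sqrt a$ on $[c,\infty)$ is fine whenever $A$ is nonempty.

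The second half of your sketch --- replacing $m$ by $\ex{X}$ and sharpening the denominator to the Bernstein form $2(r\ex{X}+t/3)$ with no leading constant $2$ --- is where the actual work in McDiarmid--Reed lies, and your description (``re-running the convex-distance argument with a carefully chosen threshold $b$'' and ``standard Bennett--Bernstein convex analysis'') is accurate in spirit but suppresses real technicalities: the choice of threshold, the elimination of the factor $2$, and the passage from median to mean all require care to land on the exact constants. If you intend this as a genuine proof rather than a pointer to the literature, that part needs to be fleshed out; as a summary of the known argument, it is fine.
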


The key lemma in the proof of \cref{lemma: one connection survives} is the following lemma about a random vertex subset $V^*$ of a given graph $\Gamma$, containing each vertex of $\Gamma$ independently with some probability $q$. The lemma states that, with large probability, in $\Gamma[V^*]$ we can find a large matching (i.e.\ a large set of pairwise disjoint edges). We will apply this lemma with $\Gamma$ being the graph on $\binom{[m]}2$ given in \cref{lemma: one connection survives}, and the random subset $V^*\su \binom{[m]}2$ corresponding to the edge set of the random graph $H^*_X$.

\begin{lemma}\label{lem:first-two-sparsifications}
 Let~$\Gamma$ be a graph, and consider a random vertex set~$V^*\subseteq V(\Gamma)$ where every vertex of~$\Gamma$ is included independently with some probability~$q\leq 1/\Delta_\Gamma$.  Then with probability at least
  \[1-\exp\paren[\bigg]{-\frac{q^2e\paren{\Gamma}}{200}},\] there exists a 
  matching $M^\ast$ of size at least $q^2e\paren{\Gamma}/20$ in the induced subgraph $\Gamma[V^*]$. 
\end{lemma}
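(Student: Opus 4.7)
The plan is to first show that $\ex{\nu(\Gamma[V^*])} \ge cq^2e(\Gamma)$ for some absolute constant $c>0$, and then apply the McDiarmid--Reed inequality (\cref{lemma: talagrand}) to turn this into a high-probability statement. The hypothesis $q\le 1/\Delta_\Gamma$ is used essentially only in the expectation bound. For the expectation, I would use a random greedy matching procedure. Let $\sigma$ be an independent uniformly random permutation of $E(\Gamma)$, and let $M$ be obtained by sweeping through the edges in the order given by $\sigma$ and adding each $e\in E(\Gamma[V^*])$ that shares no vertex with any previously added edge. For a fixed edge $e=\{u,v\}\in E(\Gamma)$, let $N(e)$ be the set of edges of $\Gamma$ other than $e$ sharing a vertex with $e$, and set $X_e:=|N(e)\cap E(\Gamma[V^*])|$. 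Conditional on $u,v\in V^*$ (probability $q^2$) and on the outcome of $V^*$, the edge $e$ is added to $M$ iff $\sigma(e)<\sigma(f)$ for every $f\in N(e)\cap E(\Gamma[V^*])$, which by symmetry of $\sigma$ happens with probability $1/(X_e+1)$. Since each $f\in N(e)$ has its non-$\{u,v\}$ endpoint in $V^*$ with probability $q$, and $|N(e)|\le 2(\Delta_\Gamma-1)$, the hypothesis $q\le 1/\Delta_\Gamma$ gives $\cex{X_e}{u,v\in V^*}\le 2$. Jensen's inequality applied to the convex function $x\mapsto 1/(x+1)$ then yields $\cex{1/(X_e+1)}{u,v\in V^*}\ge 1/3$, and summing $\pr{e\in M}\ge q^2/3$ over $e\in E(\Gamma)$ gives $\ex{\nu(\Gamma[V^*])}\ge \ex{|M|}\ge q^2e(\Gamma)/3$.

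For the concentration step, view $\nu(\Gamma[V^*])$ as a function of the $|V(\Gamma)|$ independent Bernoulli$(q)$ indicators $\1[v\in V^*]$. Flipping a single coordinate changes $\nu$ by at most one, so the Lipschitz hypothesis of \cref{lemma: talagrand} holds. Furthermore, if $M_0$ is a maximum matching in $\Gamma[V^*]$ of size $k$, then the set $J$ of its $2k$ endpoints has $|J|\le 2\nu(\Gamma[V^*])$, and $M_0$ persists in any configuration agreeing with $V^*$ on $J$, verifying the certifiability hypothesis of \cref{lemma: talagrand} with $r=2$. Plugging in $t=\ex{\nu(\Gamma[V^*])}/2\ge q^2 e(\Gamma)/6$, a short chase of constants yields $\pr{\nu(\Gamma[V^*])\le q^2 e(\Gamma)/6}\le \exp(-q^2 e(\Gamma)/200)$, which is stronger than the claim since $q^2 e(\Gamma)/6\ge q^2 e(\Gamma)/20$.

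The only non-routine step is the expectation bound. Replacing the random-order greedy argument by the naive bound $\nu(H)\ge e(H)/(\Delta(H)+1)$ for $H=\Gamma[V^*]$ would cost a factor of up to $\Delta_\Gamma$, which is too weak (and indeed, without $q\Delta_\Gamma\le 1$ the statement is false: a star $K_{1,n}$ with $q=1$ has matching number $1$ but $q^2 e(\Gamma)/20=n/20$). The random-order greedy works precisely because $q\Delta_\Gamma\le 1$ forces the expected number of edges of $\Gamma[V^*]$ conflicting with any fixed $e$ to be bounded by an absolute constant, so that $e$ survives into $M$ with probability $\Omega(q^2)$ independent of $\Delta_\Gamma$ or the global structure of $\Gamma$.
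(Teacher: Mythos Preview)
Your proof is correct and shares the paper's overall two-step structure: lower-bound $\ex{\nu(\Gamma[V^*])}$, then apply the McDiarmid--Reed inequality (\cref{lemma: talagrand}) with $r=2$ and $t=\ex{\nu(\Gamma[V^*])}/2$, using exactly your certifiability witness (the $2k$ endpoints of a maximum matching). The concentration step is identical to the paper's.

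The difference lies in how the expectation is bounded. The paper takes a more elementary route: it lets $E^*$ be the set of edges $\{u,v\}\in E(\Gamma[V^*])$ for which both $u$ and $v$ have degree~$1$ in $\Gamma[V^*]$. Such edges are automatically pairwise disjoint, so $\nu(\Gamma[V^*])\ge |E^*|$, and for a fixed edge one has $\pr{\{u,v\}\in E^*}\ge q^2(1-q)^{2(\Delta_\Gamma-1)}\ge q^2 e^{-2}\ge q^2/10$, using $q\le 1/\Delta_\Gamma$ and $(1-1/t)^{t-1}\ge 1/e$. This gives $\ex{\nu(\Gamma[V^*])}\ge q^2 e(\Gamma)/10$. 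Your random-order greedy with Jensen is a bit more involved but yields the better constant $q^2 e(\Gamma)/3$; the paper's isolated-edge count is shorter and avoids the auxiliary permutation $\sigma$. Both arguments use $q\Delta_\Gamma\le 1$ in exactly the same place: to make the expected number of $\Gamma[V^*]$-neighbors of a fixed edge $O(1)$.

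One minor slip: you write that $e$ is added to $M$ ``iff'' $\sigma(e)<\sigma(f)$ for every $f\in N(e)\cap E(\Gamma[V^*])$, but this is only a sufficient condition (an earlier $f$ could itself have been blocked). Since you only need the lower bound $\cpr{e\in M}{V^*}\ge 1/(X_e+1)$, the argument is unaffected.
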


To prove the lemma, we apply \cref{lemma: talagrand} to the matching number of $\Gamma[V^*]$. 
\begin{proof}[Proof of \cref{lem:first-two-sparsifications}]
If $e(\Gamma)=0$, the statement in the lemma trivially holds, so we may assume $e(\Gamma)\geq 1$, and hence $\Delta_\Gamma\geq 1$.

We need to show that with very large probability the graph $\Gamma[V^*]$ contains a matching of size at least $q^2e\paren{\Gamma}/20$, i.e., it has matching number $\nu(\Gamma[V^*])\geq q^2e\paren{\Gamma}/20$. To do so, we first show a lower bound on $\mathbb{E}[\nu(\Gamma[V^*])]$. 
       Let us consider the set $E^*$ consisting of all edges $\{u,v\}\in E(\Gamma[V^*])$  with $d_{\Gamma[V^*]}(u)=d_{\Gamma[V^*]}(v)=1$. In other words, $E^*$ is the collection of all edges $\{u,v\}\in E(\Gamma)$ with $u,v\in V^*$ where both $u$ and $v$ have degree $1$ in $\Gamma[V^*]$. Note that then the edges $e\in E^*$ are automatically pairwise disjoint (because if two such edges were to meet in a vertex, then this vertex would need to have degree at least $2$), and thus we always have $\nu(\Gamma[V^*])\ge |E^*|$.  
       Observe that for any fixed edge $\{u,v\}\in E(\Gamma)$ we have
    \begin{align*}
    \pr{\{u,v\}\in E^*}&= \pr{u,v\in V^*\text{ and }d_{\Gamma[V^*]}(u)= d_{\Gamma[V^*]}(v)= 1}\\
    &\geq \pr{u,v\in V^*\text{ and }x\not\in V^*\text{ for all }x\in N_\Gamma(u)\cup N_\Gamma(v)\setminus\{u,v\}}\\
    &\geq q^2\cdot (1-q)^{2(\Delta_\Gamma-1)}\geq q^2\cdot \Big(1-\frac{1}{\Delta_\Gamma}\Big)^{2(\Delta_\Gamma-1)} \geq q^2\cdot e^{-2}\geq \frac{q^2}{10},
    \end{align*}
    where we used the assumption $q\le 1/\Delta_\Gamma$ and that $(1-1/t)^{t-1}\geq 1/e$ holds for all positive integers $t$. Thus, we obtain
    \[\ex{\nu(\Gamma[V^*])}\ge \ex{\abs{E^*}}\geq e\paren{\Gamma}\cdot \frac{q^2}{10}=\frac{q^2e\paren{\Gamma}}{10}.\]
     Note that we can view $\nu(\Gamma[V^*])$ as a function of the  indicator random variables for the events $v\in V^*$ for $v\in V(\Gamma)$. Further note that changing only one of these variables can change $\nu(\Gamma[V^*])$ by at most $1$ (since adding or omitting a vertex from $V^*$ changes the matching number of $\Gamma[V^*]$ by at most $1$). We claim that, taking $r=2$, also the second condition in \cref{lemma: talagrand} is satisfied. Indeed, for every vertex subset $X\subseteq V(\Gamma)$, we can take $J\su X\su V(\Gamma)$ to be the vertex set of a matching of size $\nu(\Gamma[X])$ in $\Gamma[X]$. Then we have $|J|= 2\nu(\Gamma[X])$, and we furthermore have $\nu(\Gamma[Y])\ge \nu(\Gamma[X])$ for every $Y\subseteq V(\Gamma)$ with $J\subseteq Y$. 
     Hence, we can apply \cref{lemma: talagrand} to the random variable $\nu(\Gamma[V^*])$ with the parameters $r=2$ and $t=\ex{\nu(\Gamma[V^*])}/2$ to obtain:
         \begin{align*}
    \pr[\bigg]{\nu(\Gamma[V^*])< \frac{q^2e\paren{\Gamma}}{20}}&\leq \pr[\bigg]{\nu(\Gamma[V^*])< \ex{\nu(\Gamma[V^*])}-\frac{\ex{\nu(\Gamma[V^*])}}{2}}\notag\\
    &\leq \exp\paren[\bigg]{-\frac{(\ex{\nu(\Gamma[V^*])}/2)^2}{2\cdot \left(2\ex{\nu(\Gamma[V^*])}+\ex{\nu(\Gamma[V^*])}/6\right)}}\notag\\
    &\le\exp\paren[\bigg]{-\frac{\ex{\nu(\Gamma[V^*])}}{20}}\leq \exp\paren[\bigg]{-\frac{q^2e\paren{\Gamma}}{200}}.
    \qedhere
    \end{align*} 
\end{proof}

\subsection{Proof of Lemma~\ref{lemma: one connection survives}} We are now ready to prove \cref{lemma: one connection survives}. In the proof, we apply \cref{lem:first-two-sparsifications} to the given graph $\Gamma$ with vertex set $\binom{[m]}2$ and the random graph $H_X^*$, whose edges can be viewed as a random subset of the vertex set $V(\Gamma)$. We can then conclude that with very large probability there exists a large matching~$M^*$ in $\Gamma$ such that $e,f\in E(H^*_X)$ for all $\{e,f\}\in M$. Therefore, we may condition on a fixed outcome of $H^*_X$ such that this holds, and such that the event $\mathcal{T}_X$ holds (meaning that every edge in $H^*_X$ is contained in only few triangles). We can then select a submatching $M\su M^*$ such that not just all $\{e,f\}\in M^*$ are disjoint from one another, but also the sets of those edges that together with $e$ or $f$ lie on a triangle in $H^*_X$ are disjoint for all $\{e,f\}\in M$. For such a submatching $M\su M^*$, the events of having $e,f\in E(H'_X)$ (i.e., having $e,f\in E(H_X)$ and neither $e$ nor $f$ lying on a triangle in $H_X$) are independent for all $\{e,f\}\in M$. This allows us to conclude that at least one of these events is likely to happen.

\begin{proof}[Proof of Lemma~\ref{lemma: one connection survives}]
Without loss of generality, let $X=R$ (the case $X=B$ is analogous). We are given a graph $\Gamma$ with vertex set $\binom{[m]}2$ such that $e\paren{\Gamma}\geq \gamma n^2$ and $\Delta_\Gamma \leq 4n^2/m^2 \leq 4\log^{16} n \leq 8 \log^{16}m$. The edge set of the random graph $E(H^*_R)$ can be interpreted as a random subset of $\binom{[m]}2$ which includes every element of $\binom{[m]}2$ independently with probability $m^{-1/2}$. Thus, $E(H^*_R)$ is a random subset of the vertex set of $\Gamma$, and we can apply \cref{lem:first-two-sparsifications}, noting that if $n$ (and thus $m$) is sufficiently large, we have $m^{-1/2}\leq 1/(8 \log^{16}m)\leq 1/\Delta_\Gamma$. Let $\cH$ denote the event (depending only on the outcome of $H^*_R$) that there exists a matching~$M^*$ of size $|M^*|\geq (m^{-1/2})^2e\paren{\Gamma}/20=e(\Gamma)/(20m)$
in the induced subgraph $\Gamma[E(H^*_R)]$, i.e.\ a matching $M^*$ of size $|M^*|\geq e(\Gamma)/(20m)$
in $\Gamma$ such that $e,f\in E(H^*_R)$ for all $\{e,f\}\in M^*$. By \cref{lem:first-two-sparsifications} we have
\begin{equation*}
    \pr{\cH}\geq 1-\exp\paren[\bigg]{-\frac{(m^{-1/2})^2e(\Gamma)}{200}}
    \geq 1-\exp\paren[\bigg]{-\frac{\gamma n^2}{200m}}\geq 1-\exp\paren[\big]{-n\log^{7}n},
\end{equation*}
recalling that $m=\lceil n/{\log^{8}n}\rceil$ and $n$ is sufficiently large with respect to $\gamma$. 

It now suffices to show that
\begin{equation}\label{eq:to-show-for-Gamma-lemma-new}
\cpr[\big]{\text{there is no }\{e,f\} \in E(\Gamma)\text{ with }e,f \in E(H'_R)}{\cH\text{ and }\mathcal T_R}\leq \exp\paren[\big]{-3n\log n}.
\end{equation}
Indeed, upon proving this we can conclude that 
\begin{align*}
    &\mathbb P\big[\mathcal T_R\text{ holds and there is no }\{e,f\} \in E(\Gamma)\text{ with }e,f \in E(H'_R)\big]\\
    &\qquad\leq \mathbb P\big[\cH\text{ and }\mathcal T_R\text{ hold and there is no }\{e,f\} \in E(\Gamma)\text{ with }e,f \in E(H'_R)\big]+\mathbb P\big[\cH^\comp\big]\\
    &\qquad\leq \cpr[\big]{\text{there is no }\{e,f\} \in E(\Gamma)\text{ with }e,f \in E(H'_R)}{\cH\text{ and }\mathcal T_R}+\mathbb P\big[\cH^\comp\big]\\
    &\qquad\leq \exp(-3n\log n)+ \exp(-n\log^7 n)\leq \exp(-2n\log n)=n^{-2n},
\end{align*}
as desired. So it only remains to prove \cref{eq:to-show-for-Gamma-lemma-new}.

Recall that both of the events $\cH$ and $\mathcal T_R$ are determined by the outcome of the random graph~$H^*_R$. Thus, to show \cref{eq:to-show-for-Gamma-lemma-new}, we can condition on a fixed outcome $\wh{H^*_R}$ of $H^*_R$ satisfying $\cH$ and $\mathcal T_R$. This means that there is a matching~$M^*$ in $\Gamma$ of size $|M^*|\geq e\paren{\Gamma}/(20m)\geq \gamma n^2/(20m)$ with $e,f\in E(\wh{H^*_R})$ for all $\{e,f\}\in M^*$, and furthermore that every edge in $\wh{H^*_R}$ lies on at most $20\log m$ triangles of $\wh{H^*_R}$. In particular, for any edge $e\in E(\wh{H^*_R})$ there are at most $40\log m$ other edges appearing on a triangle in $\wh{H^*_R}$ together with $e$. For any $\{e,f\}\in M^*$, let $S_{\{e,f\}}\su E(\wh{H^*_R})$ be the set of edges that are equal to $e$ or $f$ or appear on a triangle in $\wh{H^*_R}$ together with $e$ or $f$. Then we have $|S_{\{e,f\}}|\leq 40\log m+40\log m+2\leq 100\log m$ for any $\{e,f\}\in M^*$, and furthermore every edge appears in $S_{\{e,f\}}$ for at most $40\log m+1\leq 50\log m$ different $\{e,f\}\in M^*$.

    Now, for any $\{e,f\}\in M^*$ there are at most $100\log m\cdot 50\log m= 5000\log^2m$ different $\{e',f'\}\in M^*$ with $S_{\{e,f\}}\cap S_{\{e',f'\}}\neq \varnothing$. Therefore, via a simple greedy procedure (or alternatively by the bound of the independence number of a graph in terms of its maximum degree), we can choose a submatching $M\su M^*$ of size
\[|M|\geq \frac{|M^*|}{5000\log^2m+1}\geq \frac{\gamma n^2/(20m)}{10^4\log^2m}\geq\frac{\gamma n^2}{10^6m\log ^2m}\geq \frac{\gamma}{2\cdot 10^6}\cdot n\log^6 n\geq 6n\log^5 n\]
such that for any distinct $\{e,f\},\{e',f'\}\in M$ we have $S_{\{e,f\}}\cap S_{\{e',f'\}}=\varnothing$.

Now, conditional on the event $H^*_R=\wh{H^*_R}$, the random graph $H_R$ is obtained as a random subgraph $H_R\su \wh{H^*_R}$ that includes every edge of $\wh{H^*_R}$ independently with probability $1/{\log^{2} m}$. Note that for every $\{e,f\}\in M\su M^*\su E(\Gamma)$, whenever we have $S_{\{e,f\}}\cap E(H_R)=\{e,f\}$, we have $e,f \in E(H_R)$ and no triangle in $H_R$ contains $e$ or $f$. Now, the events $S_{\{e,f\}}\cap E(H_R)=\{e,f\}$ are independent for all $\{e,f\}\in M$, and for each $\{e,f\}\in M$ we have
\begin{align*}
\cpr[\big]{S_{\{e,f\}}\cap E(H_R)=\{e,f\}}{H_R^*=\wh{H^*_R}}&\geq \Big(\frac{1}{\log^{2} m}\Big)^2\cdot \Big(1-\frac{1}{\log^{2} m}\Big)^{|S_{\{e,f\}}|-2}\\
&\geq \frac{1}{\log^{4} m}\cdot \Big(1-\frac{1}{\log^{2} m}\Big)^{100\log m}\geq \frac{1}{2\log^{4} m}.
\end{align*}
Thus, we obtain
\begin{align*} 
&\cpr[\big]{\text{there is no }\{e,f\} \in M\text{ with\ }e,f \in E(H_R)\text{ s.t.\ no triangle in }H_R\text{ contains }e\text{ or }f}{H_R^*=\wh{H^*_R}}\\
&\quad\leq \cpr[\big]{\text{there is no }\{e,f\} \in M\text{ with }S_{\{e,f\}}\cap E(H_R)=\{e,f\}}{H_R^*=\wh{H^*_R}}\\
&\quad\leq \bigg(1-\frac{1}{2\log^{4} m}\bigg)^{|M|}\leq \exp\paren[\bigg]{-\frac{|M|}{2\log^{4} m}}\leq \exp\paren[\bigg]{-\frac{6n\log^5 n}{2\log^{4} m}}\le \exp\paren[\big]{-3n\log n}.
\end{align*}
For any $\{e,f\} \in M\su M^*\su E(\Gamma)$ with $e,f \in E(H_R)$ such that no triangle in $H_R$ contains $e$ or $f$, by the definition of $H'_R$ we have $e,f \in E(H'_R)$. Thus, we can conclude
\[\cpr[\big]{\text{there is no }\{e,f\} \in E(\Gamma)\text{ with }e,f \in E(H'_R)}{H^*_R=\wh{H^*_R}}\leq \exp\paren[\big]{-3n\log n},\]
as desired.
\end{proof}

\subsection{Proof of Lemmas \ref{lem:easy chernoff} and \ref{lem:easy chernoff triangles}}\label{subsec:concentration ineqs}

In this subsection we derive \cref{lem:easy chernoff,lem:easy chernoff triangles} from the Chernoff bound.
We will use the following simple form of the Chernoff bound, which follows for example from \cite[Theorem~2.8 and eq.~(2.10)]{MR1782847}.

\begin{lemma}\label{lemma: chernoff}
  Suppose that~$X$ is a sum of independent indicator random variables.
  Then, for all~$t\geq 2\cdot \ex{X}$, we have
  \begin{equation*}
    \pr{X\geq t}\leq \exp\paren[\bigg]{-\frac{t}{6}}.
  \end{equation*}
\end{lemma}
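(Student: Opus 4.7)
The plan is to apply the standard exponential-moment (Chernoff) method with a well-chosen parameter. Writing $X = X_1 + \dots + X_N$ with independent indicators $X_i$, setting $p_i := \pr{X_i=1}$, and letting $\mu := \ex{X} = \sum_{i=1}^N p_i$, Markov's inequality applied to $e^{\lambda X}$ for arbitrary $\lambda > 0$ gives
\[
\pr{X \geq t} \leq e^{-\lambda t}\, \ex{e^{\lambda X}} = e^{-\lambda t} \prod_{i=1}^N \ex{e^{\lambda X_i}} \leq \exp\bigl(-\lambda t + \mu(e^\lambda - 1)\bigr),
\]
where the last inequality uses the identity $\ex{e^{\lambda X_i}} = 1 + p_i(e^\lambda - 1)$ combined with the elementary bound $1 + x \leq e^x$ and independence of the $X_i$.

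The key step is to choose $\lambda$ so that the exponent is as negative as possible under the hypothesis $\mu \leq t/2$. I would simply take $\lambda = \ln 2$, which has the pleasant effect of making $e^\lambda - 1 = 1$, and reduces the bound to $\pr{X \geq t} \leq \exp(-t\ln 2 + \mu) \leq \exp\bigl(-t(\ln 2 - \tfrac12)\bigr)$, where in the last step I use $\mu \leq t/2$. Since $\ln 2 - \tfrac 12 > 0.193 > \tfrac 16$, the stated inequality $\pr{X \geq t} \leq \exp(-t/6)$ drops out immediately.

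No serious obstacle arises; this is essentially the textbook multiplicative Chernoff bound with constants slightly weakened for convenience. The only minor judgment call is the value of $\lambda$: differentiating the exponent $\mu(e^\lambda - 2\lambda - 1)$ at the boundary $t = 2\mu$ shows that $\lambda = \ln 2$ is in fact the exact optimum there, and the same $\lambda$ continues to work uniformly for all $t \geq 2\mu$ because both $-\lambda t$ and the slack between $t(\ln 2 - \tfrac12)$ and $t/6$ scale linearly in $t$ once the condition $\mu \leq t/2$ is imposed.
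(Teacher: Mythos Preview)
Your argument is correct: the exponential-moment bound $\pr{X\geq t}\leq \exp(-\lambda t+\mu(e^\lambda-1))$ with $\lambda=\ln 2$ and $\mu\leq t/2$ gives $\exp(-t(\ln 2-\tfrac12))\leq \exp(-t/6)$, exactly as you say. The paper itself does not prove this lemma but simply cites it as a consequence of a standard Chernoff bound from a textbook, so your self-contained derivation is a welcome addition rather than a deviation.
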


With this tool, the proofs of \cref{lem:easy chernoff,lem:easy chernoff triangles} are quite straightforward.

\begin{proof}[Proof of \cref{lem:easy chernoff}]
    It suffices to prove the results for $\mathcal D_R$ and $\mathcal F_R$, since the proofs for $\mathcal D_B$ and $\mathcal F_B$ are analogous. First, fix some vertex $v \in V_R$, and let $X_v$ be its degree in $H_R$. Then $X_v$ is distributed as a binomial random variable with parameters $(m-1,p)$, so in particular $\ex{X_v}\leq pm =(2pm)/2$. Thus, \cref{lemma: chernoff} implies $\pr{X_v \geq 2pm}\leq \exp(-2pm/6) = \exp(-m^{1/2}\log^{-2} m/3)\leq m^{-2}$, where the final inequality holds for $n$ and hence $m$ sufficiently large. Thus, taking a union bound over all $v \in V_R$ proves that $\mathcal D_R$ holds with high probability. 

    Similarly, for fixed $v \in V_R$, let $Y_v$ denote the random variable $\ab{\pi_R^{-1}(v)}$. We thus have that $Y_v$ is distributed as a binomial random variable with parameters $(n,1/m)$, so $\ex{Y_v}=n/m$. We may thus apply \cref{lemma: chernoff} to see that $\pr{Y_v\geq 2n/m} \leq \exp(-(2n/m)/6) \leq \exp(-\log^{8} n/4) \leq n^{-2}\leq m^{-2}$, where the penultimate inequality again holds for sufficiently large $n$. Another union bound over all $v \in V_R$ shows that $\mathcal F_R$ holds with high probability.
\end{proof}

\begin{proof}[Proof of \cref{lem:easy chernoff triangles}]
  It suffices to show that $\mathcal T_R$ holds with high probability; the proof for $\mathcal T_B$ is analogous. For distinct~$u,v\in V(H^*_R)$, the number~$X_{u,v}$ of common neighbors of $u$ and $v$ in $H^*_R$ is a binomial random variable with parameters $(m-2,m^{-1})$. Indeed, any vertex $w\in V(H^*_R)\setminus \{u,v\}$ has edges to both $u$ and $v$ with probability $(m^{-1/2})^2=m^{-1}$, and this happens independently for all $w\in V(H^*_R)\setminus \{u,v\}$. Since~$\ex{X_{u,v}}\leq 1\leq (20 \log m)/2$, Lemma~\ref{lemma: chernoff} yields~$\pr{X_{u,v}\geq 20\log m}\leq\exp(-3\log m)=m^{-3}$. Taking a union bound over the at most $m^2$ choices for $u,v\in V(H^*_R)$ shows that with high probability we have $X_{u,v}\leq 20\log m$ for all distinct $u,v\in V(H^*_R)$. Whenever this happens, every edge in $H^*_R$ is contained in at most $20\log m$ triangles, and so $\mathcal T_R$ holds.
\end{proof}

\section{Concluding remarks}\label{sec:conclusion}

Given that we have disproved the odd Hadwiger conjecture, and given the close connection between Hadwiger's conjecture and its odd variant, it is natural to wonder whether our approach could disprove Hadwiger's conjecture itself. At the moment, there appear to be serious obstructions to achieving this.

The most natural thing one could try in order to disprove Hadwiger's conjecture is to simply analyze the presence of $K_{n/2}$ as a minor in our construction, and aim to prove that with high probability, the constructed graph does not have $K_{n/2}$ as a minor. There are at least two serious issues with making such an approach work. The first concerns the numbers: if we now consider $K_{n/2}$ as an ordinary minor (no longer required to be odd), then we have four potential connections available between two given trees $T_i$ and $T_j$ of order $2$. We would thus want $p^4 n^2 \gg n \log n$ for our union bound to work, requiring $p \gg n^{-1/4}$. Since we also need $p \leq c/\sqrt m$ in order to delete all triangles from $G(m,p)$ without removing many edges, this would require $m \ll \sqrt n$; however, several parts of our proof completely break down at such a small value of $m$.

The other issue seems perhaps even more serious. Recall that in our proof, we immediately dropped one of the conditions defining an odd connected matching, and instead passed to working with an odd connected pairing; namely, we did not require that our branch sets $T_i$ actually define a connected subgraph of $G$. For odd minors, dropping this condition turned out to be immaterial. However, for ordinary minors, one cannot simply drop this condition: it is easy to see that in any proper coloring of $G$ with $\chi(G)$ colors, there must be at least one edge between every two color classes. Thus, if we allow branch sets that are not connected, then every graph $G$ contains such a ``weak minor'' of order $\chi(G)$, simply obtained by contracting each color class of any optimal coloring. Thus, any argument that aims to disprove Hadwiger's conjecture must crucially use that the branch sets are connected, and it is difficult for us to see how this extra condition could significantly affect the relevant probabilities. 

\subsection*{Acknowledgments:} We would like to thank Micha Christoph, Jacob Fox, Marc Kaufmann, Anders Martinsson, and Rob Morris for helpful and inspiring discussions related to this topic. 

\bibliographystyle{yuval}
\bibliography{references}
\appendix
\crefalias{section}{appendix}

\section{Optimality of the factor \texorpdfstring{$\frac 32$}{3/2}}\label{appendix:optimality}
In this section, we prove Proposition~\ref{prop:stupid}, whose proof is inspired by similar arguments used in~\cite{ji2025oddcliqueminorsgraphs}. In the proof, we make use of the following result of Gallai~\cite{MR188100}. Recall that a graph is called \emph{$k$-critical} if it has chromatic number $k$ but every proper subgraph is $(k-1)$-colorable.
\begin{theorem}\label{thm:gallai}
    Let $k\geq 2$ be an integer, and let $G$ be a $k$-critical graph. If $|V(G)|\leq 2k-2$, then there exists a partition $V(G)=X_1\cup X_2$ of $V(G)$ into two non-empty sets $X_1$ and $X_2$ such $\{x_1,x_2\}\in E(G)$ for all $x_1\in X_1$ and $x_2\in X_2$.
\end{theorem}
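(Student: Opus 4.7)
My plan is to argue by contradiction via strong induction on $|V(G)|$, with Gallai's structural theorem (\cref{thm:gallai}) as the key tool. Suppose $G$ is a vertex-minimum counterexample: $\alpha(G) \leq 2$, $G$ has no $K_t$ odd minor, and $k := \chi(G) \geq \lceil \tfrac{3}{2}(t-1) \rceil + 1$. Minimality of $|V(G)|$ forces $G$ to be $k$-critical (every proper induced subgraph has $\alpha \leq 2$ and no $K_t$ odd minor, hence by induction satisfies $\chi \leq k-1$), and the inequality $|V(G)| \leq \alpha(G)\chi(G) \leq 2k$ is immediate.

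In the main case $|V(G)| \leq 2k - 2$, Gallai's theorem supplies a partition $V(G) = X_1 \sqcup X_2$ making $G$ the complete join $G[X_1] \ast G[X_2]$. Setting $G_i = G[X_i]$ and $k_i = \chi(G_i)$, the join forces $k_1 + k_2 = k$, and each $G_i$ inherits $\alpha \leq 2$. Let $s_i$ be maximal such that $G_i$ admits $K_{s_i}$ as an odd minor; applying the inductive hypothesis to $G_i$ (which has no $K_{s_i+1}$ odd minor) gives $k_i \leq \lceil \tfrac{3 s_i}{2} \rceil$. The complete bipartite join lets us combine the two odd minors into a $K_{s_1+s_2}$ odd minor in $G$: every tree in $X_1$ has all possible edges to every tree in $X_2$, so after globally flipping the coloring of one side (to align the colors of any single-vertex trees) a monochromatic cross-edge is always available. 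Hence $s_1 + s_2 \leq t - 1$, and
\[
k = k_1 + k_2 \leq \left\lceil \tfrac{3s_1}{2} \right\rceil + \left\lceil \tfrac{3s_2}{2} \right\rceil \leq \left\lceil \tfrac{3(s_1+s_2)}{2} \right\rceil + 1 \leq \left\lceil \tfrac{3(t-1)}{2} \right\rceil + 1 = k.
\]
Equality throughout forces both $s_i$ odd, $s_1 + s_2 = t - 1$, and $k_i = (3s_i+1)/2$. I would dispose of this narrow ``tight'' sub-case by iterating the Gallai decomposition inside each $G_i$—each tight piece must itself be a join of two smaller tight pieces—driving the recursion down to the impossible base case $s_i = 1$, which would demand an edgeless graph of chromatic number $2$.

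The remaining dense regime $|V(G)| \in \{2k-1, 2k\}$ is not covered by Gallai. Here every optimal $k$-coloring consists of exactly $k$ pairs (when $|V(G)| = 2k$) or $k-1$ pairs and one singleton (when $|V(G)| = 2k-1$); in either case these pairs form a (near-)perfect matching in the triangle-free complement $\ol G$. Using $k$-criticality (which forces minimum degree $\geq k-1$) to guarantee enough adjacency among these pairs, I would bundle color classes into groups of three and extract two short, properly $2$-colored trees per bundle, yielding a $K_t$ odd minor directly; the triangle-freeness of $\ol G$ supplies the required monochromatic connections between trees.

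The principal obstacle is the tight sub-case of the Gallai analysis: the arithmetic chain yields only $k \leq k$, so a genuine use of the parity constraint (both $s_i$ odd) and the rigidity of tight equality is required to extract a contradiction. Everything else—the Gallai decomposition, combining odd minors across the complete join, and the dense-regime construction—should be fairly routine once the framework is in place.
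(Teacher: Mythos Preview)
Your proposal does not address the stated theorem (Gallai's result, which the paper cites without proof from \cite{MR188100}); it is a sketch for \cref{prop:stupid}, so I compare it to the paper's proof of that proposition.

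The framework---minimal counterexample, Gallai split $G=G_1\ast G_2$, induct on the pieces, glue odd clique minors across the complete bipartite join---matches the paper. The genuine gap is your disposal of the tight sub-case $k_i=\lceil 3s_i/2\rceil$. In that case $G_i$ is \emph{not} a counterexample to the proposition (it attains equality with parameter $s_i+1$), so the minimal-counterexample hypothesis is silent inside $G_i$: you cannot force a $k_i$-critical subgraph of $G_i$ to be spanning, nor do you have $|V(G_i)|\le 2k_i-2$ in general (only $|X_1|+|X_2|\le 2k_1+2k_2-2$). Your recursion ``each tight piece must itself be a join of two smaller tight pieces'' therefore has no engine. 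The paper handles this completely differently: it fixes a vertex-minimal $Y_i\subseteq X_i$ carrying the $K_{s_i}$ odd minor and splits on whether some set $(X_i\setminus Y_i)\cup\{y\}$ is independent. If so, minimality of $Y_i$ sharpens one side to $\chi(G_i)\le\lceil\tfrac{3(s_i-1)}{2}\rceil+1$, which absorbs the slack in your arithmetic chain; if not, the leftover vertices $(X_1\setminus Y_1)\cup(X_2\setminus Y_2)$ support an additional $2$-colored tree with white edges to every $Y_i$, yielding a $K_{s_1+s_2+1}$ odd minor and hence $s_1+s_2\le t-2$. Either branch kills the tight case.

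Separately, your ``dense regime'' $|V(G)|\in\{2k-1,2k\}$ is unnecessary and the sketch there is not a proof. The paper invokes Kawarabayashi--Song \cite{kawasong} (any graph with $\alpha\le 2$ has $K_{\lceil n/3\rceil}$ as an odd minor) to obtain $n\le 3(t-1)\le 2(k-1)$ outright, so Gallai always applies to the spanning $k$-critical subgraph.
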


\noindent With this tool, we are ready to present the proof of \cref{prop:stupid}.

\begin{proof}[Proof of Proposition~\ref{prop:stupid}]
We prove the statement in the proposition by contradiction. Supposing the  statement is false, let $t$ and $G$ be chosen to form a counterexample minimizing $n:=|V(G)|$. Then $\alpha(G)\leq 2$, the graph $G$ does not contain $K_t$ as an odd minor, and we have $k:=\chi(G)>\lceil\frac{3}{2}(t-1)\rceil$ and therefore $k-1\geq \lceil\frac{3}{2}(t-1)\rceil$. By~\cite[Theorem~1.8]{kawasong}, $G$ contains $K_{\lceil n/3\rceil}$ as an odd minor, so we have $\lceil n/3\rceil\leq t-1$ and hence $n\leq 3t-3\leq 2k-2$. Observe that by our minimality assumption on $|V(G)|$, we have $\chi(G-v)=k-1$ for every $v\in V(G)$. Letting $G'$ be a $k$-critical subgraph of $G$, we must therefore have $V(G')=V(G)$, and by Theorem~\ref{thm:gallai} we find that there exists a partition $V(G)=V(G')=X_1\cup X_2$ of $V(G')=V(G)$ into non-empty subsets $X_1$ and $X_2$ such that all possible edges between $X_1$ and $X_2$ exist in $G'$ and thus also in $G$. 

Let $G_1:=G[X_1]$ and $G_2:=G[X_2]$, and note that we have $\chi(G)=\chi(G_1)+\chi(G_2)$. For $i\in \{1,2\}$, let $t_i$ denote the largest positive integer such that $G_i$ has $K_{t_i}$ as an odd minor. By our minimality assumption on $|V(G)|$, we then have $\chi(G_i)\leq \lceil\frac{3}{2}t_i \rceil$ for $i\in \{1,2\}$.

Finally, for $i\in \{1,2\}$, let $Y_i\subseteq X_i$ be chosen as an inclusion-wise minimal subset of $X_i$ such that $G[Y_i]$ still contains $K_{t_i}$ as an odd minor, and let $T_1^i,\ldots,T_{t_i}^i$ be a collection of vertex-disjoint trees in $G[Y_i]$ as in the definition of odd minors. This means that for $i\in \{1,2\}$ there exist proper colorings of the vertices of each of the trees $T_{1}^i,\ldots,T_{t_i}^i$ with the colors black and white, such that any two of these trees are connected by some monochromatic edge. Note that each of these trees  is either a single vertex or has both black and white vertices, and that for each $i\in \{1,2\}$ all single-vertex trees among $T_1^i,\ldots,T_{t_i}^i$ must have the same color. Swapping the colors black and white if needed in the colorings of $T_{1}^i,\ldots,T_{t_i}^i$ for one or both $i\in \{1,2\}$, we may assume without loss of generality that for each $i\in \{1,2\}$, all trees $T_1^i,\ldots,T_{t_i}^i$ contain a white vertex.

Now, by considering the collection of trees $T_1^1,\ldots,T_{t_1}^1, T_1^2,\ldots,T_{t_2}^2$ we find $K_{t_1+t_2}$ as an odd minor in $G$. This means that $t_1+t_2\leq t-1$. Moving on, we distinguish two cases.

\noindent \textbf{Case 1: There exist $i\in \{1,2\}$ and $y\in Y_i$ such that $(X_i\setminus Y_i)\cup \{y\}$ is independent.}

Let us assume without loss of generality that $i=1$ (the case $i=2$ is analogous). In this case, we can obtain a better estimate on the chromatic number of $G_1$, as follows. By minimality of $Y_1$, the graph $G_1':=G_1-((X_1\setminus Y_1)\cup \{y\})=G[Y_1\setminus \{y\}]$ does not contain $K_{t_1}$ as an odd minor. Thus, since $\alpha(G_1')\leq \alpha(G)\leq 2$, and by minimality of $G$, we obtain $\chi(G_1)\leq \chi(G_1')+1\leq  \left\lceil\frac{3}{2}(t_1-1)\right\rceil +1$. We therefore have
\[\chi(G)=\chi(G_1)+\chi(G_{2})\leq \left(\left\lceil\frac{3(t_1-1)}{2}\right\rceil +1\right)+\left\lceil\frac{3t_{2}}{2}\right\rceil=\left\lceil\frac{3(t_1-1)}{2}\right\rceil +\left\lceil\frac{3t_{2}}{2}\right\rceil+1.\]
Consider first the case that $t_1$ is even and $t_2$ is odd. Then we have $\big\lceil\frac{3(t_1-1)}{2}\big\rceil=\frac{3(t_1-1)}{2}+\frac12$ and $\big\lceil\frac{3t_2}{2}\big\rceil=\frac{3t_2}{2}+\frac12$, as well as $\big\lceil\frac{3(t_1+t_2)}{2}\big\rceil=\frac{3(t_1+t_2)}{2}+\frac12$ (since $t_1+t_2$ is odd). So we can conclude
\[\chi(G)\leq \left(\frac{3(t_1-1)}{2} +\frac12\right)+\left(\frac{3t_2}{2}+\frac12\right)+1=\frac{3(t_1+t_2)}{2} +\frac12=\left\lceil\frac{3(t_1+t_2)}{2}\right\rceil\leq \left\lceil\frac{3(t-1)}{2}\right\rceil.\]
In all other cases, we have that $t_1$ is odd (and hence $t_1-1$ is even) or $t_2$ is even and thus $\big\lceil\frac{3(t_1-1)}{2}\big\rceil+\big\lceil\frac{3t_2}{2}\big\rceil\leq \frac{3(t_1-1)}{2}+\frac{3t_2}{2}+\frac12$ holds, since the expressions $\big\lceil\frac{3(t_1-1)}{2}\big\rceil-\frac{3(t_1-1)}{2}$ and $\big\lceil\frac{3t_2}{2}\big\rceil-\frac{3t_2}{2}$ are both bounded by $\frac12$ and at least one of these two expressions is equal to $0$. Therefore we obtain
\[\chi(G)\leq \left\lceil\frac{3(t_1-1)}{2}\right\rceil +\left\lceil\frac{3t_{2}}{2}\right\rceil+1\leq \frac{3(t_1-1)}{2}+\frac{3t_2}{2}+\frac12+1=\frac{3(t_1+t_2)}{2} \leq\left\lceil\frac{3(t-1)}{2}\right\rceil.\]
In any case, this yields a contradiction to the assumption $\chi(G)>\left\lceil\frac{3}{2}(t-1)\right\rceil$ and concludes the proof of Case 1.

\noindent\textbf{{Case 2: For all $i\in \{1,2\}$ and $y\in Y_i$, the set $(X_i\setminus Y_i)\cup \{y\}$ is not independent.}}

This, in particular, implies that $X_1\setminus Y_1\neq \varnothing$ and $X_2\setminus Y_2\neq \varnothing$.
Let $T\subseteq G[(X_1\setminus Y_1)\cup (X_2\setminus Y_2)]$ be a tree with a proper $2$-coloring defined as follows. If 
$X_1\setminus Y_1$ is an independent set in $G$, then we let $T$ be any spanning tree of the complete bipartite graph spanned between $X_1\setminus Y_1$ and $X_2\setminus Y_2$ in $G$, and we color all vertices in $X_1\setminus Y_1$ white and all vertices in $X_2\setminus Y_2$ black. On the other hand, if the set $X_1\setminus Y_1$ is not independent, then we pick two adjacent vertices $u,v\in X_1\setminus Y_1$, and some vertex $w\in X_{2}\setminus Y_{2}$, and define $T$ as the path with vertices $u,v,w$ in $G$ (in this order), with $u$ and $w$ colored white and $v$ colored black. 

We claim that now, every vertex in $Y_1\cup Y_2$ has a white vertex in $T$ as a neighbor. To verify this, consider first the case that $X_1\setminus Y_1$ is independent, then the set of white vertices in $T$ is $X_1\setminus Y_1$. Now, by assumption of Case 2, every vertex $y\in Y_1$ must have a neighbor in $X_1\setminus Y_1$ (i.e., a white neighbor in $T$). Furthermore, every vertex $y\in Y_2\su X_2$ also has a neighbor in $X_{1}\setminus Y_{1}$ (since all possible edges between $X_1$ and $X_2$ exist in $G$). Thus, every vertex in $Y_1\cup Y_2$ has a white vertex in $T$ as a neighbor if $X_1\setminus Y_1$ is independent.

Next, consider the second case, where $T$ is the path on vertices $u,v,w$ with $u,v\in X_1$ and $w\in X_{2}$, and $u,w$ are the white vertices of $T$. Now, every vertex $y\in Y_1\su X_1$ is a neighbor of $w$ and every vertex $y\in Y_{2}\su X_{2}$ is a neighbor of $u$. Thus, again every vertex in $Y_1\cup Y_2$ has a white vertex in $T$ as a neighbor.

Now, the collection of the trees $T_1^1,\ldots,T_{t_1}^1, T_1^2,\ldots,T_{t_2}^2, T$ shows that $G$ contains  $K_{t_1+t_2+1}$ as an odd minor in $G$. Indeed, we clearly have a monochromatic edge between any two of the trees $T_1^1,\ldots,T_{t_1}^1$ as well as between any two of the trees $T_1^2,\ldots,T_{t_2}^2$, by how we chose these trees. Furthermore, for any $j_1\in [t_1]$ and $j_2\in [t_2]$ there is a monochromatic edge between the trees $T_{j_1}^1$ and $T_{j_2}^2$, since both of them contain a white vertex, and these two white vertices form an edge in $G$ (since $G$ contains all possible edges between $X_1$ and $X_2$). Finally, every vertex in $Y_1\cup Y_2$ has a white neighbor in $T$, so in particular from each of the trees $T_1^1,\ldots,T_{t_1}^1, T_1^2,\ldots,T_{t_2}^2$ there is a monochromatic edge to~$T$.

Since we assumed that $G$ does not contain $K_t$ as an odd minor, we can now conclude that $t_1+t_2+1 \leq t-1$. This implies
\[
\chi(G)=\chi(G_1)+\chi(G_2) \leq \left\lceil\frac{3t_1}{2}\right\rceil+\left\lceil\frac{3t_2}{2}\right\rceil \leq \left(\frac{3t_1}{2}+\frac{1}{2}\right)+\left(\frac{3t_2}{2}+\frac{1}{2}\right)<\frac{3(t_1+t_2+1)}{2} \leq \frac{3}{2}(t-1),
\]
contradicting our initial assumption $\chi(G)>\left\lceil\frac{3}{2}(t-1)\right\rceil$. This concludes the proof of Case~2 and thereby of the whole proposition. 
\end{proof}

\end{document}